\newtheorem{definition}{Definition}[section]
\newtheorem{theorem}{Theorem}
\newtheorem{proposition}{Proposition}
\newtheorem{lemma}{Lemma}
\patchcmd{\mmeasure@}{\measuring@true}{
  \measuring@true
  \ifnum-\linenopenaltypar>\interdisplaylinepenalty
    \advance\interdisplaylinepenalty-\linenopenalty
  \fi
  }{}{}
\newcommand{\timeint}{(0,T)}
\newcommand{\di}[1]{\,\mathrm{d}#1}
\newtcolorbox{mybox}[1]{%
    tikznode boxed title,
    enhanced,
    arc=0mm,
    interior style={white},
    attach boxed title to top left= {yshift=-\tcboxedtitleheight/2-0.05cm, xshift=0.7cm},
    fonttitle=\small\bfseries,
    colbacktitle=white,coltitle=black,
    boxed title style={size=small,colframe=white,boxrule=0pt},
    title={#1}}
	\title{Strongly Coupled Two-scale System with Nonlinear Dispersion:\\ Weak Solvability and Numerical Simulation}
\author[a]{Vishnu Raveendran\footnote{raveendr@ins.uni-bonn.de}}
\author[b]{ Surendra Nepal}
\author[c]{Rainey Lyons}
\author[b]{ Michael Eden}
\author[b]{Adrian Muntean}
\affil[a]{Institute for Numerical Simulation, University of Bonn, Germany}
\affil[b]{Department of Mathematics and Computer Science, Karlstad University, Sweden}
\affil[c]{Department of Applied Mathematics, University of Colorado Boulder, USA}
	\date{\today} 
\def\es{\varepsilon}
\begin{document}


\maketitle

\begin{abstract}\label{abstract}
	We investigate a two-scale system featuring an upscaled parabolic dispersion-reaction equation intimately linked to a family of elliptic cell problems.
    The system is strongly coupled through a dispersion tensor, which depends on the solutions to the cell problems, and via the cell problems themselves, where the solution of the parabolic problem interacts nonlinearly with the drift term.    
    This particular mathematical structure is motivated by a rigorously derived upscaled reaction-diffusion-convection model that describes the evolution of a population of interacting particles pushed by a large drift through an array of periodically placed obstacles (i.e., through a regular porous medium). 

    We prove the existence and uniqueness of weak solutions to our system by means of an iterative scheme, where particular care is needed to ensure the uniform positivity of the dispersion tensor.  
    Additionally, we use finite element-based approximations for the same iteration scheme to perform multiple simulation studies. Finally, we highlight how the choice of micro-geometry (building the regular porous medium) and of the nonlinear drift coupling affects the macroscopic dispersion of particles.  
\end{abstract}
{\bf Key words}: Two-scale system; Nonlinear dispersion; Weak solutions; Iterative scheme; Simulation.
\\
{\bf MSC2020}: 35G55; 35A01; 35M30; 47J25; 65M60.  
\section{Introduction}\label{section_introduction}
The transport of substances through porous media typically involves a combination of drift, diffusion, and adsorption processes, all of which take place at the scale of the heterogenous porous structure. 
The interplay between these physical processes and the geometry of the underlying heterogeneous structure is intricate. 
Only in rare circumstances can one obtain useful insight into the quantitative understanding of the effective dispersion mechanisms (responsible for an eventual macroscopic migration of substances), the overall storage capacity of a given heterogeneous medium, or the resulting turbulent diffusion and anomalous forms of dissipation; see e.g. \cite{mclaughlin1985convection} for a discussion of simple turbulent flows conveying microstructure information.
However, understanding these phenomena is crucial for a number of modern technological applications, including the design of drug delivery systems, remediation of groundwater contamination, design of efficient hydrogen storage systems, and creation of polymer-based morphologies to facilitate current transport for organic solar cells; we refer the reader to \cite{Donovan_PLOS,hydrogen,PNAS} for examples of such applications. 

In this work, we are primarily interested in understanding what effects are observable at a macroscopic level, i.e. in terms of the effective indicators, within the interplay between fast drift and diffusion.
In other words, we are revisiting the Taylor-Aris notion of dispersion now taking place within a specific type of heterogeneous medium\footnote{G. I. Taylor tells in his 1954 paper \cite{taylor1954conditions} that shear flow smears out the concentration distribution, enhancing the rate at which it spreads in the direction of the flow. R. Aris confirms that the effect takes place at large Peclet numbers(see \cite{aris1956dispersion}).}.
As a first step, we are neither looking at processes related to sorption/adsorption mechanisms nor to surface chemical reactions, even though we are aware that such interface processes take place in real materials and can have an effect on the macroscopic response.  
We are, of course, not the first ones to look at Taylor-Aris dispersion for spatially structured domains.
The majority of the previous approaches aimed to justify that the macroscopic transport mechanisms can be effectively modeled via a constant dispersion tensor which encodes both the geometric properties of the underlying microstructure (like porosity and connectivity) and the diffusivity of the solute. 
Such effective models for transport in porous media are usually derived via some type of scale analysis; this can be the reference elementary volume (REV) approach often utilized in the engineering community \cite{bear1988dynamics, bear2012phenomenological,Wood2003RVE}, or the more rigorous route of mathematical homogenization \cite{ALLAIRE20102292,Allaire_dispersion,Amaziane}. 
In these works, the resulting limit problem is structurally identical to a diffusion problem and the constant symmetric effective dispersion can be directly calculated via cell problems.
However, in some situations, this perspective is too simplistic as the physical reality is more complicated.
For instance, earlier investigations have shown that the resulting dispersion tensor may not be symmetric \cite{Auriault2010} and that it can depend  (linearly or nonlinearly) on the solute concentration \cite{Schotting1999}. 
Such effects cannot be captured in the aforementioned averaging approaches. 

To this end, we are considering a dispersion-reaction equation governing the effective solute transport and production in a porous medium where the dispersion depends nonlinearly on the solute concentration,
\begin{subequations}\label{eq:introduction_problem}
\begin{equation}\label{eq:introduction_nonlinearpde}
\partial_tu+\text{div}(-D^*(W)\nabla u)=f\quad \text{in}\ \timeint\times\Omega,
\end{equation}
over some macroscopic domain $\Omega\subset\mathbb{R}^2$ representing a porous medium.
While \cref{eq:introduction_nonlinearpde} is macroscopic, the dispersion tensor $D^*(W)$ depends on microscopic solutions $W=(w_1,w_2)$ of cell problems over the fluid part of the porous microstructure $Y$ via
\begin{equation}\label{eq:introduction_nonlinearpde_cell}
 \text{div}_y\left(-D \nabla_y w_i+G_i(u)B w_i\right)=\text{div}_y\left(D e_i\right)\quad \text{in} \ Y,\quad (i=1,2).
 \end{equation}
\end{subequations}
Here, $D$ denotes the solute diffusivity, $B$ is the velocity field representing the movement of the solvent, and $G_i(u)$ represents the drift coupling.
Via the product $G_i(u)Bw_i$, the macroscopic function $u$ nonlinearly interacts with the cell problem solution $W$. 
Without this specific interaction in the cell problems, i.e., taking the case $G_i(u)\equiv const.$, the dispersion tensor assumes the standard form of linear dispersion, see \cite{Allaire_dispersion}.
Moreover, the case $G_i(u)\equiv0$ corresponds to pure diffusion \cite{Allaire92}.
\newpage

{The structure of the micro-macro model is motivated as the homogenization limit of a nonlinear drift problem of the type
\begin{equation}\label{Eq:orgin}
       \partial_t u^{\varepsilon} +\mathrm{div}(-D^{\varepsilon}\nabla u^{\varepsilon}+ \frac{1}{\es}B^{\varepsilon}P(u^{\varepsilon}))=f^{\varepsilon}
\end{equation}
defined in a periodic porous media $\Omega_\varepsilon\subset\mathbb{R}^2$ saturated with an incompressible fluid where $0<\varepsilon\ll1$ is a small scale parameter representative of this periodic porous geometry. 
The unknown function $u^\es$ represents the concentration profile of a population of interacting particles dissolved in the fluid.
These particles are transported through the porous medium by $(i)$ diffusion governed by the diffusivity $D^\es$, which is positive definite and $\es$-periodic, and $(ii)$ nonlinear advection of the form $\frac{1}{\es}B^{\varepsilon}P(u^{\varepsilon})$.
Here, $B_\varepsilon\colon\Omega_\varepsilon\to\mathbb{R}^2$ denotes the velocity profile of the fluid and is assumed to be known and $\es$-periodic.
The particular scaling with $\es^{-1}$ is the reason why such problems are called fast drift problems (cf.\cite{Allaire_dispersion,allaire2016,ijioma2019fast}), while the function $P(r)$ models the potentially nonlinear interactions of the particles with the moving fluid.
In particular, the case $P(r)=r$ corresponds to standard linear advection and $P(r):=r(1-r)$ is the result of a hydrodynamic limit of a \textit{totally asymmetric simple exclusion process} (TASEP) for interacting particles traversing a porous medium; for details concerning the derivation of this specific nonlinearity, we refer the reader to \cite{CIRILLO2016436,raveendran2024scaling,cirillo2020upscaling,raveendran21}.
At the internal boundary of the porous media, we assume the following Robin-type boundary condition:
\begin{equation}
    (-D^{\varepsilon}\nabla u^{\varepsilon}+ \frac{1}{\es}B^{\varepsilon}P(u^{\varepsilon}))\cdot n_\es=0.
\end{equation}
}
\noindent
{The upscaling of the aforementioned problem can be carried out by using the technique of two-scale convergence with drift, we refer to \cite{raveendran2023homogenization} for more details regarding this limiting process. 
In the above-mentioned scenario where the nonlinearity takes the form $P(r):=r(1-r)$, we have $G_i(r)=1-2r$ in \cref{eq:introduction_nonlinearpde_cell}
It is worth noting that here $G_i(r)=P'(r)$ emerges as a direct consequence of the homogenization process as outlined in \cite{raveendran2023homogenization}.
Since nonlinear interactions in the diffusion/dispersion part appear quite often in upscaled models, e.g., \cite{marusik2005,allaire2016,ijioma2019fast,bringedal2020phase}, we study the case where the $G_i(u)$ are general nonlinear functions while maintaining a simplified structure for the nonlinear dispersion term.}

It is worth noting that our scientific questions are very much  in the spirit of \cite{Goudon}, where the authors ask: what is the macroscopic response of  
\begin{equation}\label{Eq:Goudon}
       \partial_t u^{\varepsilon} +\mathrm{div}(-\eta\nabla u^{\varepsilon} +
       u^\varepsilon V^\varepsilon)=f^{\varepsilon},
\end{equation} where $\eta>0$ and $V^\varepsilon$ is some large random field? 
In other words, what precisely is the term $\langle uV\rangle$ in the corresponding upscaled equation
\begin{equation}\label{Eq:Goudon_averaged}
       \partial_t \langle u\rangle +\mathrm{div}(-\eta\nabla \langle u \rangle +
       \langle uV\rangle)=\langle f \rangle,
\end{equation}
where $\langle\cdot\rangle$ denotes some suitable average, and how can this be computed numerically. 
Comparing \eqref{Eq:Goudon} and \eqref{Eq:Goudon_averaged} with \eqref{Eq:orgin} and \eqref{eq:introduction_nonlinearpde}, we see that the vector field $V^\varepsilon$ is in our setting what the TASEP requires, and hence, the meaning of $\langle uV\rangle$ is for us precisely $-D^*(W)\nabla u$. 

System \eqref{eq:introduction_problem} is an example of a two-scale system, where we refer to the dispersion equation \eqref{eq:introduction_nonlinearpde} as the macroscopic equation and the cell problems \eqref{eq:introduction_nonlinearpde_cell} as the microscopic equations.
Such systems are also referred to as distributed-microstructure models, terminology introduced by R. E. Showalter in  \cite{showalter1993distributed}.
The presence of nonlinear coupling in the dispersion tensor has been previously studied.
For instance, in \cite{EDEN2022103408}, the authors looked at the nonlinear effect of colloids deposition on diffusion in porous media and established local-in-time existence (see also \cite{Adrian_siap,nikolopoulos2023multiscale}). 
In the same context, in \cite{Wiedemann2023} a similar model was analyzed via homogenization techniques.
Additionally, a related homogenized model for reactive flows in porous media was derived in \cite{allaire2016} {in which the authors derived the upscaled model as a two-scale system with nonlinear dispersion}. {We remark that the upscaled model for the large convection problem in \cite{Amaziane} 
 has some similarities to our model but differs structurally from the dispersion tensor discussed in our work, primarily due to two reasons. First, the scaling of the microscopic problem in \cite{Amaziane} introduces an $\varepsilon$ scaling in front of the diffusion coefficient, unlike the scaling used in our Equation \eqref{Eq:orgin}. Second, the nonlinear term 
$P(r)$ in Equation \eqref{Eq:orgin} plays a critical role in shaping the cell problem during homogenization, leading to a different macroscopic dispersion tensor. }

The simulation of such scale separated problems often comes with unique computational challenges. 
For instance, the nonlinear drift coupling in the cell problems requires solving \cref{eq:introduction_nonlinearpde_cell} many times. 
Well-established numerical techniques capable of handling such multiscale features are reported in \cite{allaire2005multiscale,henning2014adaptive, abdulle2012heterogeneous,bris2019multiscale}.
 Some of the closer works that explore the simulation of two-scale systems similar to ours are \cite{lind2020semidiscrete,olivares2021two,nikolopoulos2023multiscale}.
 In \cite{lind2020semidiscrete}, the authors address a two-scale coupled  problem where the coupling is via the reaction rate of macroscopic equation.
 On the other hand, authors in \cite{olivares2021two} study and simulate a two-scale system for a phase-field model for precipitation and dissolution in porous media via an iterative scheme and, in  \cite{nikolopoulos2023multiscale}, clogging of a porous medium via aggregation of colloidal particles was explored.
\par This paper is structured as follows:
In \cref{section_setting}, we present the mathematical model of interest, the geometric setup, as well as the assumptions on coefficients and data needed for the mathematical analysis.
This is followed by \cref{section_analysis} where the main result, namely the existence of solutions (\cref{theorem_existance_of_p(omega)}), is proven via an iteration scheme.
In \cref{section_simulation}, we use this iteration scheme to numerically simulate different scenarios, showcasing that the proposed two-scale model is able to capture interesting effects at both the microscopic and macroscopic scale. 
The results reported in this section are only preliminary, as the numerical analysis and further numerical exploration of our system are postponed for follow-up work.  
Finally in \cref{section_conclusion}, we conclude our work with a short summary of the findings and with a discussion of possible future investigations.

\section{Setting of the problem}\label{section_setting}
Let $T>0$ represent the time horizon.
We denote by $\Omega \subset \mathbb{R}^2$ a non-empty open bounded domain with $C^{2+\alpha}$ boundary, $\partial \Omega $, for some $0<\alpha<1$, and by $Y_0 \subset (0,1)^2 $ a compact set with positive measure and Lipschitz boundary such that the microscopic domain $Y:=(0,1)^2\backslash Y_0$ is connected.
We let $t\in \timeint$ represent the time variable, $x\in \Omega$ the macroscopic space variable, $y\in Y$ the microscopic space variable, $\Gamma_N=\partial Y_0$, and $n_y$ the outward unit normal vector across the interface $\partial Y$. 
With the sets $\Omega$ and $Y$ at hand, we can build a two-scale geometry. 
For a schematic representation of such a geometry, see \cref{Fig:GeometrySchem}.
We take $e_i$, $i=1,2$, to denote the standard unit basis vectors in $\mathbb{R}^2$.

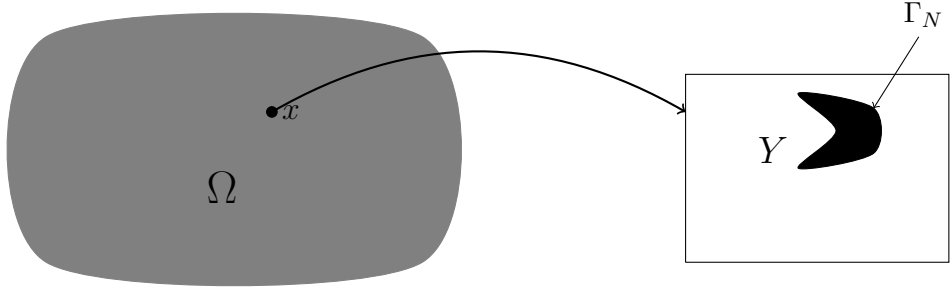
\begin{figure}[h]
    \begin{center}
        \begin{tikzpicture}
        \filldraw [gray] plot [smooth cycle] coordinates {(0,0) (5,0) (5,3) (0,3) };
\path[thick,->] (3.0,2)  edge [bend left] (8.5,2);
\filldraw[black] (3,2) circle (2pt) node[anchor=west]{\large $x$};
\filldraw[black] (2,1) circle (0pt) node[anchor=west]{\LARGE $\Omega$};
\draw (8.5,0) node [anchor=north] {{\scriptsize }} to (12,0) node [anchor=north] {{\scriptsize }}  to (12,2.5)node [anchor=south] {{\scriptsize }} to (8.5,2.5)node [anchor=south] {{\scriptsize }} to (8.5,0);
			\filldraw [black] plot [smooth cycle] coordinates {(10.5,1.75) (10,1.25) (11,1.45) (11,2.05)(10,2.25)};
	\draw[<-](11,2.05) to (11.6,3) node[anchor=south] {{ $\Gamma_{N}$}};
   \filldraw[black] (10,1.5) circle (0pt) node[anchor=east]{\Large $Y$};
\end{tikzpicture}
\caption{\label{Fig:GeometrySchem}Typical two-scale geometry: schematic representation of the macroscopic domain $\Omega$ and of the microscopic domain $Y$ with internal boundary $\Gamma_N$.}
    \end{center}
\end{figure}

We study the following nonlinear two-scale system: 
\begin{subequations}\label{Eq:Main_System}
\begin{mybox}{Problem $P(\Omega)$}
Find the unknown functions $u$ and $W:= (w_1,w_2)$ satisfying
 \begin{align}
    \partial_t u +\mathrm{div}(-D^*(W)\nabla  u)&=f &\mbox{in}& \hspace{.2cm}  \timeint\times \Omega, \label{homeq1}\\
    u&=0 &\mbox{on}& \hspace{.3cm}  \timeint\times \partial\Omega,\\
    u(0)&=g &\mbox{in}& \hspace{.3cm}  \overline{\Omega},\\
    \vspace{1cm}\nonumber\\
    \mathrm{div}_y\left(-D \nabla_y w_i+G_i(u)B w_i\right)&=\mathrm{div}_y (D e_i) &\mbox{in}& \hspace{.2cm}  Y,\label{cell3}\\
    \left( -D\nabla_y w_i+BG_i(u)w_i\right)\cdot n_y &= \left( De_i\right)\cdot n_y &\mbox{on}& \hspace{.2cm} \Gamma_N,\label{cellbn3}\\
    w_i\, &\mbox{ is $Y$--periodic}.&&\label{homeqf}
\end{align}
\end{mybox} 
\end{subequations}
\noindent{Relating back to our discussion in \cref{section_introduction}, the structure of Problem $P(\Omega)$ is motivated from the upscaled model of the microscopic problem \eqref{Eq:orgin} with $P(u^\es)=u^\es(1-u^\es)$ (cf. also \cite{raveendran2023homogenization,CIRILLO2016436}). Here} $u$ represents the particle concentration on macroscopic porous domain $\Omega$, while $W$, the solution to cell problems, relays the microscopic information, including the shape of the domain,
drift, and diffusion, to the macroscopic scale via the dispersion tensor, $D^*(W)$, given by
\begin{equation}\label{macrodiff}
     D^*(W):=\frac{1}{|Y|}\int_Y D(y)\left(I+\begin{bmatrix}
\frac{\partial w_1}{\partial y_1} & \frac{\partial w_2}{\partial y_1} \\
\frac{\partial w_1}{\partial y_2} & \frac{\partial w_2}{\partial y_2}
\end{bmatrix}\right)\di{y}
\end{equation}
where $I$ is the $2\times 2$ identity matrix and $|Y|$ is the Lebesgue measure of the set $Y$.
The periodic boundary condition \eqref{homeqf} and the Neumann influx boundary condition \eqref{cellbn3} are common in effective models derived by homogenization (see \cite{lukkassen2002two}).

We assume the reaction rate $f:\timeint\times\Omega\longrightarrow\mathbb{R}$, the initial condition \(g:\Omega\longrightarrow\mathbb{R}\), the microscopic diffusion matrix $D:Y\rightarrow \mathbb{R}^{2\times 2}$, the microscopic velocity field $B:Y\rightarrow \mathbb{R}^2$, and the nonlinear functions  $G_1,G_2:\mathbb{R}\rightarrow \mathbb{R}$ are given and satisfy additional assumptions discussed later.
Note that, to calculate the macroscopic equation and dispersion tensor, we need to, at each point, $(t,x)\in \timeint\times\Omega$, solve the microscopic problem in $Y$.

\paragraph{Assumptions.} From now on $C$ denotes a positive real number, possibly changing its value from line to line.
We consider the following restrictions on data and model parameters:
\begin{enumerate}[label=({A}{{\arabic*}})]
  \item\label{A1} 
The microscopic diffusion matrix satisfies $D\in (H^1_\#(Y)\cap L^\infty(Y))^{2\times2}$ and there exists $\theta>0$  such that 
 \begin{equation*}
 \theta |\eta |^{2} \leq
D\eta\cdot \eta \quad\text{for all}\  \eta\in \mathbb{R}^2 \; \mbox{and almost all } y\in Y;
 \end{equation*}
 
\item  $G_1,G_2:\mathbb{R}\rightarrow\mathbb{R}$ are locally Lipschitz functions, i.e., Lipschitz on compact sets;
 	 \label{A2}
  \item \label{A3} The microscopic drift velocity $B\in (H^1_\#(Y)\cap L^\infty(Y))^2$ satisfies
\begin{equation*}
    \begin{cases}
 	      \mathrm{div}_yB=0\hspace{.4cm}\mbox{in} \hspace{.4cm}   Y,\\
 	      B\cdot n_y =0 \hspace{.4cm}\mbox{on} \hspace{.4cm}  \Gamma_N;
 	\end{cases}
\end{equation*}

\item The reaction rate satisfies $f\in C^{\alpha,\frac{\alpha}{2}}((0,T)\times\Omega)$ and the initial condition $g\in C^{2+\alpha}(\Omega)$, for some $0<\alpha<1.$
\label{A4}
 \end{enumerate}
Assumptions \ref{A1}-\ref{A4} may seem technical but can be satisfied by many physically relevant functions.
For example, {Assumption \ref{A3}} can be satisfied by taking $B$ to be the solution to an incompressible Stokes problem with no penetration into the obstacle and periodic boundary conditions across $\partial Y\setminus \Gamma_N$.
 This is also the approach we have taken in our simulations, see \cref{section_simulation}.
 
    \par To discuss the concept of weak solution to Problem $P(\Omega)$, we introduce the following functional spaces:
\begin{align*}
H_{\#}^1(Y)&:=\{v\in H^1(Y): v \;\mbox{is}\; Y- \mbox{periodic}\},\\
    \mathcal{U}&:=\{v\in L^2(\timeint;H_0^{1}(\Omega)): \partial_t v\in L^2(\timeint;H^{-1}(\Omega))\},\\
    \text{and }\mathcal{W}&:=\left\{v\in H_{\#}^1(Y): \int_Yv(y)\di{y}=0\right\},
\end{align*}
where $H_{\#}^1(Y)$ is equipped with the standard $H^1(Y)$ norm.
We denote by $\langle\cdot,\cdot\rangle$ the duality pairing between $H^{-1}(\Omega)$ and $H_0^1(\Omega).$
Finally, we define the weak solution of Problem $P(\Omega)$ in the following sense:
\begin{definition}\label{D1}
We say that $(u,W)$ is a weak solution to Problem $P(\Omega)$ if $u\in\mathcal{U}$ with $u(0,\cdot) = g$ and, for almost every $(t,x)\in \timeint \times \Omega$, $w_i\in \mathcal{W}$ and the following integral equations are satisfied:
\begin{subequations}
\begin{align}
       \langle \partial_t u, \phi \rangle+\int_{\Omega} D^*(W) \nabla u \cdot\nabla \phi \di{x} &=  \int_{\Omega} f \phi \di{x},\label{wf}\\
       \int_{Y}\big(D\nabla_y w_i  - G_i(u(t,x))Bw_i\big)\cdot \nabla_y \psi\di{y}&=\int_{Y} \mathrm{div}_y( D e_i)\psi \di{y}-\int_{\Gamma_N} De_i\cdot n_y\psi \di{\sigma},\label{wfcell}
\end{align}
\end{subequations}
for all  $(\phi, \psi)\in H^1(\Omega)\times H_{\#}^1(Y)$ and $i\in\{1,2\}$.
\end{definition}

We remark that in Problem $P(\Omega)$ we can also have nonhomogeneous Dirichlet boundary conditions $u=h$ on $\partial\Omega$  for some given $h\in H^{\frac{1}{2}}(\partial\Omega).$ This regularity allows us to extend $h$ to $\Tilde{h}\in H^1(\Omega)$ and transform the original problem into
\begin{align*}
    \partial_t \Tilde{u} +\mathrm{div}( -D^*(W)\nabla_x  \Tilde{u})&=\Tilde{f}(W) &\mbox{in}& \hspace{.2cm}  \timeint\times \Omega, \\
    \Tilde{u}&=0 &\mbox{on}& \hspace{.3cm}  \timeint\times \partial\Omega,\\
    \Tilde{u}(0)&=\Tilde{g} &\mbox{in}& \hspace{.3cm}  \overline{\Omega},\\
    \vspace{1cm}\nonumber\\
    \mathrm{div}_y\left( -D \nabla_y w_i+G_i(\Tilde{u}+\Tilde{h}) B w_i\right)&=\mathrm{div}_y(D e_i) &\mbox{in}& \hspace{.2cm}  Y,\\
    \left( -D\nabla_y w_i+BG_i(\Tilde{u}+\Tilde{h})w_i\right)\cdot n_y &= \left( De_i\right)\cdot n_y &\mbox{on}& \hspace{.2cm} \Gamma_N,\\
    w_i\, &\mbox{ is $Y$--periodic},&&
\end{align*}
where $\Tilde{u}:=u-\Tilde{h},\;\Tilde{f}(W):=f-\mathrm{div}(D^*(W)\nabla \Tilde{h} )$ and $\Tilde{g}:=g-\Tilde{h}$.

\section{Weak solvability of Problem $P(\Omega)$}\label{section_analysis}
In this section, we prove the existence of solutions to Problem $P(\Omega)$ in the sense of Definition \ref{D1}.
There are two main difficulties in the analysis of the involved equations: (i) the nonlinear coupling in the drift, and (ii) the positivity of the dispersion tensor.

The main strategy of our proof follows four distinct steps:
\begin{itemize}[leftmargin=1.37cm]
    \item[Step 1:]  We construct a two-scale iterative scheme based on Problem $P(\Omega)$.
    This scheme is constructed by decoupling $P(\Omega)$ into a family of linear problems, which we refer to as $P^k(\Omega)$, $k\in\mathbb{N}$.
    The notion of weak solutions $(u^k,W^k)$ to Problem $P^k(\Omega)$ is given in \cref{D2}.
    \item[Step 2:] We construct an auxiliary problem similar to \eqref{cell3}--\eqref{homeqf} and show it is well-posed in \cref{L1}.
    This helps us to understand the coupling between $G_i(u^k)$ and $W^k$.
    We then show in Lemma \ref{L2} that the dispersion tensor is bounded and positive definite, uniformly with respect to the index $k$.
    \item[Step 3:] With Lemma \ref{L1} and Lemma \ref{L2}, we show that for each $k$ the problems $P^k(\Omega)$ are well-posed, their solutions satisfy $k$-independent energy estimates, and are also uniformly bounded; see both Theorem \ref{theorem_well_posedness} and Lemma \ref{L4}.
    \item[Step 4:] Finally, we prove our main result (Theorem  \ref{theorem_existance_of_p(omega)}) -- the solution to the iterative scheme converges in a suitable sense to a solution to Problem $P(\Omega)$.
\end{itemize}
\subsection{Iterative scheme}\label{IterationScheme}
We begin by introducing the iterative scheme and its corresponding weak formulation.
We set $u^0=g$, and, for any $k\in \mathbb{N}\cup\{0\}$, we denote as $u^{k+1}, w^{k}_1,$ and $w^{k}_2$ the solutions to the following decoupled system:
\begin{subequations}\label{Eq:IterationScheme}
\begin{mybox}{Diffusion-drift problem}
\vspace{-.3cm}
    \begin{align}
    \mathrm{div}_y\left( -D \nabla_y w_i^{k}+ G_i(u^{k})B w_i^{k}\right)&=\mathrm{div}_y(D e_i)&\mbox{in}& \hspace{.2cm}  Y,\label{kc1}\\
    \left( -D\nabla_y w_i^{k}+BG_i(u^k)w_i^{k}\right)\cdot n_y &= \left( De_i\right)\cdot n_y &\mbox{on}& \hspace{.2cm} \Gamma_N,\label{kc2}\\
    w_i^{k}\, &\mbox{ is $Y$--periodic},&i\in &\{1,2\}\label{kc4}
\end{align}
\end{mybox}
\begin{mybox}{Reaction-dispersion problem}
\vspace{-.3cm}
\begin{align}
    \partial_t u^{k+1} +\mathrm{div}( -D^*(W^k)\nabla_x  u^{k+1})&=f &\mbox{in}& \hspace{.2cm}  \timeint\times \Omega, \label{kp1}\\
    u^{k+1}(0)&=g &\mbox{in}& \hspace{.3cm}  \overline{\Omega},\label{kp2}\\
    u^{k+1}&=0 &\mbox{on}& \hspace{.3cm}  \timeint\times \partial\Omega,\label{kp3}
    \end{align}
\end{mybox}
\noindent where the dispersion tensor $D^*(W^k)$ is given by
\begin{mybox}{Effective dispersion tensor}
\begin{equation}\label{kmacrodiff}
     D^*(W^k):=\frac{1}{|Y|}\int_Y D(y)\left(I+\begin{bmatrix}
\frac{\partial w^{k}_1}{\partial y_1} & \frac{\partial w^{k}_2}{\partial y_1} \\
\frac{\partial w^{k}_1}{\partial y_2} & \frac{\partial w^{k}_2}{\partial y_2}
\end{bmatrix}\right)\di{y}.
\end{equation}
\end{mybox}
\end{subequations}
We refer to the iterative scheme \eqref{kc1}--\eqref{kp3} together with \eqref{kmacrodiff}, as Problem $P^k(\Omega)$. 
We define the concept of weak solutions to Problem $P^k(\Omega)$ in a similar manner as for the Problem $P(\Omega)$ (see \cref{D1}).
\begin{definition}\label{D2}
   Given $u^k\in\mathcal{U},$  we say that $(u^{k+1},W^k)$ is a weak solution to Problem $P^k(\Omega)$ if $u^{k+1} \in \mathcal{U}$ with $u^{k+1}(0,\cdot) = g$ and, for almost every $(t,x) \in \timeint\times \Omega$, $w_i^{k} \in \mathcal{W}$ and the following integral equations are satisfied:
\begin{subequations}
\begin{align}
       \langle \partial_t u^{k+1}, \phi \rangle+\int_{\Omega} D^*(W^{k}) \nabla u^{k+1} \cdot\nabla \phi \di{x} &=  \int_{\Omega} f \phi \di{x}\label{wfs1}\\
       \int_{Y}\left(D\nabla_y w^{k}_i-G_i(u^{k}(t,x))Bw^{k}_i\right)\cdot \nabla_y \psi\di{y}&=\int_{Y} \mathrm{div}_y( D e_i)\psi \di{y}-\int_{\Gamma_N} De_i\cdot n_y\psi \di{\sigma},
\end{align}
\end{subequations}
for all  $(\phi, \psi)\in H^1(\Omega)\times H_{\#}^1(Y)$ and $i\in\{1,2\}$.
\end{definition}
\subsection{Existence results for the Problem $P^k(\Omega)$}
In this section, we study the well-posedness of the iterative scheme \eqref{kc1}--\eqref{kmacrodiff}.
To analyze the scheme, we first focus on the elliptic equation \eqref{kc1}--\eqref{kc4} by constructing an auxiliary problem similar to \eqref{cell3}--\eqref{homeqf}.
We show this auxiliary problem is well-posed, prove energy estimates, and establish the regularity of weak solutions.
Then with the help of Lemma \ref{L2}, which establishes the uniform positive definite property and boundedness of $D^*(W^k)$, we show our iterative scheme Problem $P^k(\Omega)$ is well-posed. {Note that if $G_i(u_k),$ is a constant, then the positivity of the dispersion tensor follows via standard arguments, and further if $G_i(u_k)=0$ then we can show that the dispersion tensor is symmetric. The main difficulty of showing $D^*(W^k)$ is positive definite is that the functions $w_i^k$ depend nonlinearly on $u_k$. This nonlinear dependence complicates the analysis, as it introduces intricate interactions that must be carefully addressed to establish the positive definiteness of the tensor }

\begin{lemma}\label{L1}
Let $p \in \mathbb{R}$. Consider the following auxiliary problem: Find $W(p,\cdot)=(w_1(p,\cdot),w_2(p,\cdot)) \in \mathcal{W}^2$ satisfying
\begin{subequations}\label{system:cellproblem_with_p}
\begin{align}
    \mathrm{div}_y\left(- D \nabla_y w_i+pB w_i\right)&=\mathrm{div}_y(D e_i) &\mbox{in}& \hspace{.2cm}  Y,\label{aux1}\\
    \left( -D\nabla_y w_i+pB w_i\right)\cdot n_y &= \left( De_i\right)\cdot n_y &\mbox{on}& \hspace{.2cm} \Gamma_N,\label{aux2}\\
    w_i\, &\mbox{ is $Y$--periodic},&&\label{aux3}
    \end{align}
\end{subequations}
where $i\in\{1,2\}$.
{Assume \ref{A1}--\ref{A3}} holds, then 
\begin{enumerate}[label=(\roman*)]
    \item there exists a unique weak solution $w_i(p,\cdot)\in \mathcal{W}$   to the Problem \eqref{aux1}--\eqref{aux3},
 \item there exists a constant $C>0$ independent of $p$ such that
\begin{equation}
   \|\nabla_y w_i(p,\cdot)\|_{L^2(Y)}\leq C,\label{L1e1}
\end{equation}
\item there exists a constant $C>0$ such that, for all $p_1,p_2\in \mathbb{R}$, 
\begin{equation}\label{L1e2}
    \int_Y |\nabla_y( w_i(p_1,y)-w_i(p_2,y))|^2 \di{y}\leq C |p_1-p_2|^2.
\end{equation}
\end{enumerate}

\end{lemma}
\begin{proof}
\par \textbf{\textit{(i)}} We begin by stating the weak form of \eqref{aux1}--\eqref{aux3}:
\begin{equation}
      \int_{Y} \big(D\nabla_y w_i- pBw_i\big)\cdot \nabla_y \psi\di{y}=\int_{Y} \mathrm{div}_y( D e_i)\psi \di{y}-\int_{\Gamma_N} De_i\cdot n_y\psi \di{\sigma}\quad \label{auxwf}
\end{equation}
for all $\psi\in H_{\#}^1(Y)$ and $i\in\{1,2\}$. 
From \ref{A1}, we have \begin{equation}\label{L1e3}
    \int_Y \mathrm{div}_y (D(y) e_i) \di{y}= \int_{\Gamma_N} (D(y) e_i)\cdot n_y \di{\sigma}.
\end{equation}
Using identity \eqref{L1e3} together with standard arguments involving the classical Fredholm alternative, we have the existence and uniqueness of $w_1(p,\cdot)$ and $w_2(p,\cdot)$ in the sense of \eqref{auxwf} (for details we refer the reader \cite[Section 4]{ALLAIRE20102292}).
\par\textbf{\textit{(ii)}} First, using integration by parts,  we have the following equation:
\begin{align}
    \int_{Y}pBw_i\cdot \nabla_y w_i\di{y}&=\frac{p}{2}\int_{Y}B\nabla_y (w_i^2)\di{y}\nonumber\\
    &=-\frac{p}{2}\int_{Y}w_i^2\mathrm{div}_y B\di{y}+\frac{p}{2}\int_{\partial Y\backslash \Gamma_N} B\cdot n_y w_i^2\di{\sigma}\nonumber\\
    &\hspace{2cm}+\frac{p}{2}\int_{\Gamma_N} B\cdot n_y w_i^2\di{\sigma}.\label{xL1e4}
\end{align}
 By \ref{A3}, we have that $\int_{Y}w_i^2\mathrm{div}_y B\di{y}=0$ and $\int_{\Gamma_N} B\cdot n_y w_i^2\di{\sigma}=0$.
 Additionally, by the periodicity of $w_i$ and $B$, we have $\int_{\partial Y\backslash \Gamma_N} B\cdot n_y w_i^2\di{\sigma}=0$, and so we obtain
 \begin{equation}
    \int_{Y}pBw_i\cdot \nabla_y w_i\di{y}=0.\label{L1e4}
\end{equation}
Now, taking the test function $\psi = w_i$ in the weak formulation \eqref{auxwf} and making use of \eqref{L1e4}, we obtain
\begin{equation}
     \int_{Y} D\nabla_y w_i\cdot\nabla_y w_i \di{y} =\int_{Y} \mathrm{div}_y (D e_i) w_i \di{y}-\int_{\Gamma_N}\left( De_i\right)\cdot n_y w_i \di{\sigma}.\label{L1e5}
\end{equation}
Making use of \ref{A1} and integration by parts, we see that 
\begin{equation}
     \theta \int_{Y} |\nabla_y w_i|^2  \di{y} \leq \int_{Y} | (D e_i) \nabla_y w_i| \di{y}.\label{L1e6}
\end{equation}
Young's inequality applied to the right-hand side of \eqref{L1e6} together with \ref{A1}, yields \eqref{L1e1}.
\par \textbf{\textit{(iii)}} Let
\begin{equation*}
    \overline{w_i}:=w_i(p_1,y)-w_i(p_2,y) \text{ and }\overline{p}:=p_1-p_2
\end{equation*}
and consider the weak formulations \eqref{auxwf} with $p= p_1,\,p_2$. 
We take the test function $\psi = \overline{w_i}$ in both weak forms and subtract the equations to arrive at
\begin{equation}
    \int_{Y}\big(D(y)\nabla_y \overline{w_i}- B(y)(p_1w_i(p_1,y)-p_2w_i(p_2,y)))\big)\cdot \nabla _y \overline{w_i}\di{y}=0\label{L1e7}.
\end{equation}
Adding and subtracting the term $\int_Y B(y)p_2w_i(p_1,y)\di{y}$ to \eqref{L1e7} and making use of assumption \ref{A1}, we obtain
\begin{equation}\label{L1e9}
    \theta\int_{Y} |\nabla_y \overline{w_i} |^2 \di{y}
    \leq \int_{Y}B(y)\big(\overline{p}w_i(p_1,y)+p_2\overline{w_i}\big)\cdot \nabla _y \overline{w_i} \di{y}.
\end{equation}
Since $w_1$ and $w_2$ are periodic, $\overline{w_i}$ is periodic as well.
Therefore, we may follow similar arguments which lead to  \eqref{L1e4}  to obtain
\begin{equation}\label{L1e10}
    \int_{Y}B(y)p_2\overline{w_i}\cdot \nabla _y \overline{w_i}\di{y}=0.
\end{equation}
Returning to \eqref{L1e9}, using Young's inequality, we have
\begin{align}
    \theta\int_{Y} |\nabla_y \overline{w_i} |^2 \di{y}\leq\int_{Y}B(y)\overline{p}w_i(p_1,y)\cdot \nabla _y \overline{w_i}\di{y}&\leq C \overline{p}\int_{Y}|w_i(p_1,y)|| \nabla _y \overline{w_i}| \nonumber\\
    &\leq \frac{1}{2\theta}|\overline{p}|^2 \|w_i(p_1)\|_{L^2(Y)}^2+\frac{\theta}{2} \int_{Y} |\nabla_y \overline{w_i} |^2 \di{y}.\label{L1e11}
\end{align}
Then, with \eqref{L1e1}, we arrive at
\begin{equation}
    \int_{Y} |\nabla_y \overline{w_i} |^2 \di{y}\leq  C|\overline{p}|^2.\label{L1e12}
\end{equation}

\end{proof}
\par To study the weak solvability of \eqref{kp1}--\eqref{kp3}, we first show that the dispersion tensor \eqref{kmacrodiff} is both uniformly positive definite and uniformly bounded.
To this end, for any $p\in\mathbb{R}$, we define $\overline{D}(p)\in \mathbb{R}^{2\times 2}$ by
\begin{align}
    [\overline{D}(p)]_{i,j}:=\dfrac{1}{|Y|}\int_Y D(y)\left(e_j+\nabla_y w_j (p,y)\right) \cdot e_i\di{y},\label{dbar}
\end{align}
where $i,j\in \{1,2\}$ and $w_j(p,y)$ is the weak solution to \eqref{aux1}--\eqref{aux3}.  
We begin by showing that the matrix $\overline{D}(p)$ is Lipschitz continuous with respect to $p$.
\begin{lemma}\label{L3}
    {Assume \ref{A1}--\ref{A3}} hold.
    There exists a constant $C>0$, independent of $p$ and $q$, such that 
 \begin{equation*}
     |[\overline{D}(p)]_{i,j}-[\overline{D}(q)]_{i,j}|\leq C |p-q|.
 \end{equation*}
\end{lemma}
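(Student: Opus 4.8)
The plan is to directly estimate the difference $[\overline{D}(p)]_{i,j}-[\overline{D}(q)]_{i,j}$ using the definition \eqref{dbar} and then reduce everything to the already established stability bound \eqref{L1e2} from \cref{L1}. Subtracting the two expressions, the identity matrix part cancels and we are left with
\begin{equation*}
    [\overline{D}(p)]_{i,j}-[\overline{D}(q)]_{i,j}=\frac{1}{|Y|}\int_Y D(y)\big(\nabla_y w_j(p,y)-\nabla_y w_j(q,y)\big)\cdot e_i\di{y}.
\end{equation*}
From here I would apply the Cauchy--Schwarz inequality together with the $L^\infty$-bound on $D$ coming from \ref{A1} to obtain
\begin{equation*}
    \big|[\overline{D}(p)]_{i,j}-[\overline{D}(q)]_{i,j}\big|\leq \frac{\|D\|_{L^\infty(Y)}}{|Y|}\,|Y|^{1/2}\,\|\nabla_y w_j(p,\cdot)-\nabla_y w_j(q,\cdot)\|_{L^2(Y)}.
\end{equation*}

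Then I would invoke \eqref{L1e2} with $p_1=p$ and $p_2=q$, which gives $\|\nabla_y w_j(p,\cdot)-\nabla_y w_j(q,\cdot)\|_{L^2(Y)}\leq C|p-q|$, and substituting this in yields the claimed Lipschitz estimate with a constant $C$ depending only on $\|D\|_{L^\infty(Y)}$, $|Y|$, $\theta$, and the data bounds, hence independent of $p$ and $q$. Strictly speaking $\nabla_y w_j$ is a vector and the bracketed scalar is one of its components, so one uses $|(D\xi)\cdot e_i|\leq \|D\|_{L^\infty}|\xi|$ pointwise before integrating; this is routine.

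There is really no serious obstacle here: the lemma is essentially a corollary of part (iii) of \cref{L1}, and the only thing to be careful about is bookkeeping the constants and making sure the $L^\infty$ bound on $D$ (rather than just the ellipticity) is what controls the upper estimate — that is the one place where \ref{A1} is used in its full strength. If one wanted to be fully explicit, one could also note that $w_j(p,\cdot)$ is well-defined by part (i) of \cref{L1}, so every term above makes sense. I would write the proof in three short lines: expand the difference, apply Cauchy--Schwarz with $\|D\|_{L^\infty}$, and close with \eqref{L1e2}.
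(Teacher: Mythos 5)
Your argument is correct and is essentially identical to the paper's proof: both expand the difference via \eqref{dbar} (the $e_j$ term cancels), use the $L^\infty$ bound on $D$ from \ref{A1} together with the Cauchy--Schwarz inequality, and close with the stability estimate \eqref{L1e2} from \cref{L1}~(iii). Your remark that the gradient index should be $w_j$ (matching the definition \eqref{dbar}) is also the correct bookkeeping.
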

\begin{proof}
     From definition \eqref{dbar}, assumption \ref{A1}, and Lemma \ref{L1} $(iii)$, we have
\begin{align}
     |[\overline{D}(p)]_{i,j}-[\overline{D}(q)]_{i,j}|&\leq C \int_Y |D(y)|\left|\nabla_y( w_i(p,y)-w_i(q,y))\right|\di{y}\nonumber\\
     &\leq C \left(\int_Y |\nabla_y( w_i(p,y)-w_i(q,y))|^2 \di{y}\right)^{\frac{1}{2}}\nonumber\\
     &\leq  C|p-q|.\label{p1}
 \end{align}
\end{proof}

We now show that the dispersion tensor is both uniformly bounded and uniformly positive definite. 
\begin{lemma}\label{L2}
{Assume \ref{A1}--\ref{A3}} hold and let $M>0$.
For $p=(p_1,p_2)\in [-M,M]^2$, let $W_p:=(w_1(p_1,\cdot ),w_2(p_2,\cdot ))$ be the corresponding solutions of the auxiliary problem \eqref{aux1}--\eqref{aux3}. Then the macroscopic dispersion tensor $D^*(W_p)$ satisfies the following properties:
\begin{enumerate}[label=(\roman*)]
    \item There exists $\theta_M>0$ independent of $p$ such that 
         \begin{equation}\label{L2e1}
             \theta_M|\eta |^{2} \leq
             D^*(W_p)\eta \cdot \eta\quad \text{for all}\ \eta\in \mathbb{R}^2;
          \end{equation}

    \item There exist a constant $C>0$ independent of $p$ such that
         \begin{equation}\label{L2e2}
            | [D^*(W_p)]_{i,j}|\leq C,\quad (i,j\in\{1,2\}).
         \end{equation}
\end{enumerate}
\end{lemma}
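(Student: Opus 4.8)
\textbf{Proof proposal for Lemma \ref{L2}.}

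The plan is to establish both properties by reducing the analysis of the tensor $D^*(W_p)$ to the auxiliary problem studied in Lemma \ref{L1} and to the matrix $\overline{D}(p)$ introduced in \eqref{dbar}. The first observation is that $D^*(W_p)$ coincides, entrywise, with $\overline{D}(p_1,p_2)$ in the sense that $[D^*(W_p)]_{i,j} = \frac{1}{|Y|}\int_Y D(y)\big(e_j + \nabla_y w_j(p_j,y)\big)\cdot e_i \di{y}$; this matches the definition of the dispersion tensor in \eqref{kmacrodiff} once one reads off the columns of the matrix $I + [\partial w_k/\partial y_\ell]$. Hence the boundedness claim (ii) is immediate: by \ref{A1} the matrix $D$ is in $L^\infty(Y)^{2\times2}$, so $|[D^*(W_p)]_{i,j}| \le \frac{1}{|Y|}\|D\|_{L^\infty}\big(|Y|^{1/2} + \|\nabla_y w_j(p_j,\cdot)\|_{L^2(Y)}\big)|Y|^{1/2}$, and the estimate \eqref{L1e1} from Lemma \ref{L1}(ii) bounds $\|\nabla_y w_j(p_j,\cdot)\|_{L^2(Y)}$ by a constant independent of $p_j$. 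This gives (ii).

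For the uniform positive definiteness (i), the standard route is to rewrite the quadratic form $D^*(W_p)\eta\cdot\eta$ using the cell problem itself. Fix $\eta = (\eta_1,\eta_2)\in\mathbb{R}^2$ and set $w_\eta := \sum_{j} \eta_j w_j(p_j,\cdot) \in \mathcal{W}$. Using the weak formulation \eqref{auxwf} with test function $\psi = w_\eta$ (summed against $\eta_i$), together with the key cancellation $\int_Y p\, B w_i\cdot\nabla_y w_i \di y = 0$ established in \eqref{L1e4} — which also kills the mixed drift terms after polarization, since $\int_Y B\nabla_y(w_i w_j) = 0$ by the same divergence-free and boundary argument — one should obtain the identity
\begin{equation*}
D^*(W_p)\eta\cdot\eta = \frac{1}{|Y|}\int_Y D(y)\big(\eta + \nabla_y w_\eta\big)\cdot\big(\eta + \nabla_y w_\eta\big)\di{y}.
\end{equation*}
Once this identity is in hand, assumption \ref{A1} gives $D^*(W_p)\eta\cdot\eta \ge \frac{\theta}{|Y|}\int_Y |\eta + \nabla_y w_\eta|^2\di{y}$. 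To convert this into a lower bound of the form $\theta_M|\eta|^2$, I would argue by contradiction/compactness: the map $p \mapsto \lambda_{\min}(D^*(W_p))$, the smallest eigenvalue, is continuous on the compact set $[-M,M]^2$ by Lemma \ref{L3} (Lipschitz continuity of the entries), hence attains a minimum $\theta_M$; and this minimum is strictly positive because for each fixed $p$ the quadratic form is positive — if it vanished for some $\eta\ne0$ then $\nabla_y w_\eta = -\eta$ a.e., forcing $w_\eta$ to be an affine function $-\eta\cdot y + c$ on $Y$, which contradicts $Y$-periodicity of $w_\eta$ unless $\eta = 0$.

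The main obstacle is the derivation of the quadratic-form identity, specifically making sure the drift contributions vanish not only in the diagonal terms $j=j$ (where \eqref{L1e4} applies directly) but also in the cross terms arising from the polarization: one needs $\int_Y B\cdot\nabla_y(w_i(p_i,\cdot)\,w_j(p_j,\cdot))\di y = 0$, which follows from $\mathrm{div}_y B = 0$, $B\cdot n_y = 0$ on $\Gamma_N$, and periodicity of $B$ and the $w$'s across $\partial Y\setminus\Gamma_N$ — exactly the ingredients already used for \eqref{L1e4}. A secondary subtlety is that, unlike the classical symmetric case, $D^*(W_p)$ need not be symmetric (as noted in the introduction), so the quadratic form $D^*(W_p)\eta\cdot\eta$ only sees the symmetric part; the identity above is still valid because the skew part contributes zero to $\eta\cdot D^*\eta$, and the compactness argument for the lower bound only requires continuity of the entries, which Lemma \ref{L3} already provides. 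Everything else is a routine application of Lemma \ref{L1}, \ref{A1}, and Cauchy--Schwarz.
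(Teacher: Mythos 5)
Your proposal is correct and follows essentially the same route as the paper: part (ii) is the paper's argument verbatim, and for part (i) your quadratic-form identity obtained by testing the cell problems with $w_\eta=\sum_j\eta_j w_j$ is exactly the paper's splitting of $D^*(W_p)$ into the symmetric corrector part and a drift part contributing nothing to $\eta\cdot D^*(W_p)\eta$, followed by the same periodicity-rigidity argument for strict positivity and the same continuity (Lemma \ref{L3}) plus compactness of $[-M,M]^2\times\partial B(0,1)$ argument yielding a uniform $\theta_M$. The cross-term cancellation you flag, $\int_Y B\cdot\nabla_y(w_iw_j)\di{y}=0$, is precisely the paper's skew-symmetry statement for its drift matrix $J(p)$, so even your ``main obstacle'' is resolved there in the same way.
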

\begin{proof}

    \par \textbf{\textit{(i)}} From the definition of $\overline{D}(\cdot)$ \eqref{dbar} and the definition of $D^*(\cdot)$ \eqref{macrodiff}, we have
    \begin{equation}
       [ D^*(W_p)]_{i,j}=[\overline{D}(p_i)]_{i,j},\label{Q}
    \end{equation}
    and hence,
\begin{equation}\label{xL2e1}
    [D^*(W_p)]_{i,j}=\frac{1}{|Y|}\int_Y D(y)\left(e_j+\nabla_y w_j(p_i,y)\right)\cdot e_i \di{y}.
\end{equation}
We consider the weak formulation for the Problem \eqref{aux1}--\eqref{aux3} (i.e., Equation \eqref{auxwf}) for index $j$, where we choose the test function $\psi=w_i$ and divide by $|Y|$:
\begin{equation}\label{L2e5}
    \frac{1}{|Y|}\int_{Y}\big(D\nabla_y w_j- p_jBw_j\big)\cdot \nabla_y w_i\di{y}
    =\frac{1}{|Y|}\int_Y\text{div}_y(D e_j)w_i\di{y}-\frac{1}{|Y|}\int_{\Gamma_N}\left( De_j\right)\cdot n_y w_i\di{\sigma}.
\end{equation}
Using integration by parts on $\int_Y\text{div}_y(D e_j)w_i\di{y}$ and noting the periodicity of $D(\cdot)$ and $w_i(\cdot)$, we arrive at
\begin{equation}\label{L2e6}
    \frac{1}{|Y|}\int_{Y}\big( D\nabla_y w_j- p_jBw_j+D e_j\big)\cdot \nabla_yw_i \di{y} =0.
\end{equation}
Adding \eqref{L2e6} to \eqref{xL2e1}, we see that
\begin{equation}\label{L2e7}
    [D^*(W_p)]_{i,j}=[A(p)]_{i,j}+[J(p)]_{i,j},
\end{equation}
where 
\begin{align}
    [A(p)]_{i,j}&:=\frac{1}{|Y|}\int_Y D(e_j+\nabla_yw_j)\cdot 
    (e_i+\nabla_y w_i)\di{y},\label{L2e8}\\
   \text{and } [J(p)]_{i,j}&:=-\frac{1}{|Y|}\int_{Y}p_jBw_j\cdot \nabla_y w_i\di{y}.\label{L2e9}
\end{align}
Using \ref{A3} and integration by parts on $\int_{Y}p_jBw_j\cdot \nabla_y w_i\di{y}$, we obtain
\begin{align}
    -[J(p)]_{i,j}&=-\frac{p_j}{|Y|}\int_{Y}\nabla_y(Bw_j)  w_i\di{y}+\frac{p_j}{|Y|}\int_{\partial Y}B\cdot n_y w_iw_j\di{\sigma}\nonumber\\
    &=-\frac{1}{|Y|}\int_{Y}p_jBw_i\cdot \nabla_y w_j\di{y}\nonumber\\
    &=[J(p)]_{j,i}.\label{L2e10}
\end{align}
Combining \eqref{L2e8} and \eqref{L2e10} shows that $A(p)$ and $J(p)$ are the symmetric and skew-symmetric parts of $D^*(W_p)$.
Since $J(p)$ is a $2\times 2$ skew-symmetric matrix, we have 
\begin{equation}\label{L2e11}
    J(p) \eta\cdot \eta=0\quad\text{for all}\ \eta\in \mathbb{R}^2.
\end{equation}
So, to prove \eqref{L2e1} it is enough to establish that $A(p)$ is in fact positive definite {for all $p\in [-M,M]^2$}. {For that by the standard arguments we first show that for a fixed $p$,  $A(p)$ is positive definite. }
Since $A(p)$ is symmetric, using the structure \eqref{L2e8} we have for all $\eta=(\eta_1,\eta_2)\in \mathbb{R}^2$, 
\begin{align}
A(p)\eta\cdot\eta&=\eta_1^2\frac{1}{|Y|}\int_Y D(e_1+\nabla_yw_1)\cdot 
    (e_1+\nabla_y w_1)\di{y}\nonumber\\
    &\hspace{1cm}+2\eta_1 \eta_2\frac{1}{|Y|}\int_Y D(e_1+\nabla_yw_1)\cdot 
    (e_2+\nabla_y w_2)\di{y}\nonumber\\
    &\hspace{1.5cm}+\eta_2^2 \frac{1}{|Y|}\int_Y D(e_2+\nabla_yw_2)\cdot 
    (e_2+\nabla_y w_2)\di{y}.\nonumber
    \end{align}
    Distributing $\eta_1$ and $\eta_2$ inside the integrals, we get
    \begin{align}
    A(p)\eta\cdot\eta&=\frac{1}{|Y|}\int_Y D(\begin{bmatrix} \eta_1 \\ 0  \end{bmatrix}+\eta_1\nabla_yw_1)\cdot 
    (\begin{bmatrix} \eta_1 \\ 0  \end{bmatrix}+\eta_1\nabla_y w_1)\di{y}\nonumber\\
    &\hspace{1cm}+2\frac{1}{|Y|}\int_Y D(\begin{bmatrix} \eta_1 \\ 0  \end{bmatrix}+\eta_1\nabla_yw_1)\cdot 
    (\begin{bmatrix} 0 \\ \eta_2  \end{bmatrix}+\eta_2\nabla_y w_2)\di{y}\nonumber\\
    &\hspace{1.5cm}+ \frac{1}{|Y|}\int_Y D(\begin{bmatrix} 0 \\ \eta_2  \end{bmatrix}+\eta_2\nabla_yw_2)\cdot 
    (\begin{bmatrix} 0 \\ \eta_2  \end{bmatrix}+\eta_2\nabla_y w_2)\di{y}\label{xL2e2}.
      \end{align}
      Rearranging the right side of the identity \eqref{xL2e2} and using \ref{A1} yields
    \begin{align}
           A(p)\eta\cdot\eta&=\frac{1}{|Y|}\int_Y D(\begin{bmatrix} \eta_1 \\ \eta_2  \end{bmatrix}+\sum_{i=1}^2\eta_i\nabla_yw_i)\cdot 
    (\begin{bmatrix} \eta_1 \\ \eta_2  \end{bmatrix}+\sum_{i=1}^2\eta_i\nabla_yw_i)\di{y}\nonumber\\
    &\geq \theta\frac{1}{|Y|}\int_Y |\eta+ \sum_{i=1}^2\eta_i\nabla_yw_i)|^2\;\di{y}\nonumber\\
    &\geq 0.\label{L2e12}
\end{align}
If $A(p)\eta\cdot\eta=0$ for some $\eta\in \mathbb{R}^2$, then we have
\begin{align*}
|\eta+ \sum_{i=1}^2\eta_i\nabla_yw_i)|^2=0 \,\,\,\mbox{for almost every }\, y\in Y.
\end{align*}
Consequently, this yields
\begin{equation}
\sum_{i=1}^2\eta_iw_i=C-\eta\cdot y,\label{L2e13}
\end{equation}
for some constant $C $.
Since the $w_i$ are periodic, identity \eqref{L2e13} is only possible for $\eta=0$.
Hence, $A(p)\eta\cdot \eta=0$ only for $\eta= 0$.
Therefore, combining this with \eqref{L2e11}, we get
\begin{equation}\label{L2e15}
    D^*(W_p)\eta\cdot \eta> 0.
\end{equation}
for all $p\in [-M,M]^2$ and all $\eta\neq0$.
 Define the function $F:[-M,M]^2\times \partial B(0,1)\rightarrow \mathbb{R}$ as 
\begin{equation*}
    F(p,\xi):= D^*(W_p)\xi\cdot \xi \,\,\,\mbox{for all}\; \xi\in \partial B(0,1).
\end{equation*}
     We have from Lemma \ref{L3} that  the function $p_i\mapsto \overline{D}(p_i)$ is continuous.
     Using the definition \eqref{Q}, we see that the function $p\mapsto D^*(W_p)$ is also continuous.
     Therefore, $F$ is continuous and $F(p,\xi)>0$ over the compact set $ ([-M,M]^2\times \partial B(0,1))$. Hence, there exist a $\theta_M>0$ such that
\begin{equation*}
    F(p,\xi)>\theta_M
\end{equation*}
for all $(p,\xi)\in  ([-M,M]^2\times \partial B(0,1))$.
So, for any $\eta\neq 0$ we have 
\begin{align*}
    D^*(W_p)\frac{\eta}{|\eta|}\cdot \frac{\eta}{|\eta|}&>\theta_M,\\
     \text{i.e., } D^*(W_p)\eta\cdot \eta&>\theta_M|\eta|^2.
\end{align*}
\textbf{\textit{(ii)}} The inequality \eqref{L2e2} follows directly from \ref{A1}, $(ii)$ of Lemma \ref{L1}, and the identity 
\begin{equation*}
    [D^*(W_p)]_{i,j}=\frac{1}{|Y|}\int_Y D\left(e_j+\nabla_y w_j\right)\cdot e_i\di{y}
\end{equation*}together with the Cauchy--Schwartz inequality.
\end{proof}

\begin{theorem}[Well-posedness of $P^k(\Omega)$]
\label{theorem_well_posedness}
   Assume \ref{A1}--\ref{A4} hold and let $u^0 = g$. Then, there exist a sequence $(u^{k},W^k)_{k\in \mathbb{N}\cup\{0\}}\subset\mathcal{U}\times L^\infty(\timeint\times\Omega;\mathcal{W}^2)$ such that, for each $k$, $(u^{k+1},W^k)$ uniquely solves the iterative scheme given by \eqref{kc1}--\eqref{kmacrodiff} in the sense of \cref{D2}.
  Moreover, we have 
\begin{align}
  \|u^{k}\|_{L^{\infty}(\timeint\times\Omega)}&\leq \|g\|_{L^\infty(\Omega)}+T\|f\|_{L^{\infty}(\timeint\times\Omega)}\label{T1e1}
\end{align}
   for all $k\in \mathbb{N}$.
\end{theorem}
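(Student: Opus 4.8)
The plan is to argue by induction on $k$, carrying along in the induction hypothesis not only the existence of a weak solution $u^k\in\mathcal{U}$ of the previous step but also the $L^\infty$ bound $\|u^k\|_{L^\infty(\timeint\times\Omega)}\le \kappa:=\|g\|_{L^\infty(\Omega)}+T\|f\|_{L^\infty(\timeint\times\Omega)}$. The base case $k=0$ is immediate since $u^0=g$. Assume now $u^k$ is given with $|u^k(t,x)|\le\kappa$ for a.e.\ $(t,x)$. Since $G_1,G_2$ are locally Lipschitz by \ref{A3}, the values $p_i^k(t,x):=G_i(u^k(t,x))$ lie in $[-M,M]$ for a.e.\ $(t,x)$, where $M:=\max_{i\in\{1,2\}}\sup_{|s|\le\kappa}|G_i(s)|$ depends only on $\kappa$ and is hence independent of $k$. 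Applying \cref{L1}\,(i) pointwise in $(t,x)$, I define $w_i^k(t,x,\cdot):=w_i(p_i^k(t,x),\cdot)\in\mathcal{W}$, the unique weak solution of \eqref{aux1}--\eqref{aux3} with parameter $p_i^k(t,x)$; by construction these $w_i^k$ satisfy the cell weak formulation required in \cref{D2}. Measurability of $(t,x)\mapsto w_i^k(t,x,\cdot)$ follows from measurability of $u^k$, continuity of $G_i$, and the Lipschitz (hence continuous) dependence $p\mapsto w_i(p,\cdot)$ established in \cref{L1}\,(iii); combining the $k$-independent gradient bound \eqref{L1e1} with the Poincaré--Wirtinger inequality on $\mathcal{W}$ then yields $W^k\in L^\infty(\timeint\times\Omega;\mathcal{W}^2)$ with a $k$-independent bound.

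Next I would record the properties of the coefficient field $(t,x)\mapsto D^*(W^k(t,x,\cdot))$. By the identity \eqref{Q} together with \cref{L3}, this map is measurable; by \cref{L2} it is bounded by a $k$-independent constant and coercive with a $k$-independent constant $\theta_M>0$ (only $D^*\eta\cdot\eta$ enters the coercivity estimate, so the skew-symmetric part of $D^*$ is irrelevant here). Consequently, \eqref{kp1}--\eqref{kp3} is a linear parabolic initial--boundary value problem with a uniformly elliptic coefficient matrix in $L^\infty(\timeint\times\Omega)^{2\times2}$, right-hand side $f\in L^2(\timeint\times\Omega)$, and initial datum $g\in L^2(\Omega)$ (both from \ref{A4}). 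The standard theory of linear parabolic equations (via, e.g., Galerkin approximation) yields a unique $u^{k+1}\in L^2(\timeint;H^1_0(\Omega))$ with $u^{k+1}(0,\cdot)=g$ solving \eqref{wfs1}; testing the equation shows $\partial_t u^{k+1}\in L^2(\timeint;H^{-1}(\Omega))$, so in fact $u^{k+1}\in\mathcal{U}$, and $(u^{k+1},W^k)$ solves $P^k(\Omega)$ in the sense of \cref{D2}, uniquely given $u^k$.

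It remains to propagate the bound \eqref{T1e1} to step $k+1$. Put $\kappa(t):=\|g\|_{L^\infty(\Omega)}+t\|f\|_{L^\infty(\timeint\times\Omega)}$ and test \eqref{wfs1} with $\phi:=(u^{k+1}-\kappa(t))^+$, which belongs to $L^2(\timeint;H^1_0(\Omega))$ because $u^{k+1}=0\le\kappa(t)$ on $\partial\Omega$. Using the chain rule for truncations, $\partial_t\phi=\mathbbm{1}_{\{u^{k+1}>\kappa(t)\}}\big(\partial_t u^{k+1}-\|f\|_{L^\infty(\timeint\times\Omega)}\big)$, the coercivity of $D^*(W^k)$ (which makes the dispersion term nonnegative), and $\phi(0,\cdot)=0$ since $g\le\|g\|_{L^\infty(\Omega)}$, one arrives at $\tfrac12\tfrac{d}{dt}\|\phi(t)\|_{L^2(\Omega)}^2\le\int_\Omega\big(f-\|f\|_{L^\infty(\timeint\times\Omega)}\big)\phi\,\di{x}\le0$, hence $\phi\equiv0$ and $u^{k+1}\le\kappa(T)$ a.e. The symmetric choice $\phi:=(-u^{k+1}-\kappa(t))^+$ gives $u^{k+1}\ge-\kappa(T)$ a.e., so \eqref{T1e1} holds at step $k+1$ and the induction closes.

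The linear parabolic solvability and the truncation estimate are both routine; the step that truly needs care is guaranteeing that the ellipticity and boundedness constants of $D^*(W^k)$ do not deteriorate as $k\to\infty$. This is exactly why \eqref{T1e1} must be part of the induction hypothesis rather than an afterthought: it confines every iterate $u^k$ to $[-\kappa,\kappa]$, hence every $p_i^k$ to the fixed compact interval $[-M,M]$, so that one and the same $\theta_M$ from \cref{L2} serves all $k$ and the a priori estimates remain uniform along the whole scheme.
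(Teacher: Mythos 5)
Your proposal is correct and follows essentially the same route as the paper: induction on $k$ with the $L^\infty$ bound carried in the induction hypothesis so that $G_i(u^k)\in[-M,M]$, \cref{L1} supplying the cell solutions and the $k$-uniform bound on $W^k$, \cref{L2} supplying the $k$-uniform coercivity and boundedness of $D^*(W^k)$, and standard linear parabolic (Galerkin) theory producing the unique $u^{k+1}\in\mathcal{U}$. The only deviation is in how \eqref{T1e1} is propagated: the paper invokes Duhamel's principle via the cited literature, whereas you prove it directly by a Stampacchia-type truncation argument testing with $\left(u^{k+1}-\kappa(t)\right)^+$ for $\kappa(t)=\|g\|_{L^\infty(\Omega)}+t\|f\|_{L^\infty(\timeint\times\Omega)}$, which is a valid and self-contained alternative; your explicit remark on the measurability of $(t,x)\mapsto W^k(t,x,\cdot)$ addresses a point the paper leaves implicit.
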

\begin{proof}
We prove the existence of solutions via induction.
By construction, we have $u^0=g$ and the linear elliptic problems \eqref{kc1}--\eqref{kc4} have unique solutions $w_i^0(p,\cdot)\in \mathcal{W}$, where $p=G_i(u^0(t,x))$ for almost all $(t,x)\in \timeint\times\Omega$ via \cref{L1} (i). 
Owing to \cref{L1} (ii), we have that $\|\nabla_y w_i(p,\cdot)\|_{L^2(\Omega)}\leq C$ uniformly in $p$.
This implies $\|w_i(p,\cdot)\|_{\mathcal{W}}\leq C$ via the Poincaré–Wirtinger inequality and, as a consequence, it also holds that $W^0=(w_1^0,w_2^0)\in L^\infty(\timeint\times\Omega;\mathcal{W}^2)$.
\par Let
\begin{equation*}
    m:=\|g\|_{L^\infty(\Omega)}+T\|f\|_{L^{\infty}(\timeint\times\Omega)}
\end{equation*}
and set
\begin{equation*}
    M:=\max_{r\in[-m,m]}\max_{i\in\{1,2\}} |G_i(r)|.
\end{equation*}
$M$ is well-defined as the functions $G_i$ are continuous.
Since $u^0=g$, we also have $|G_i(u^0)|\leq M$.
Using Lemma \ref{L2}, we see that the dispersion tensor $D^*(W^0)$ satisfies
\begin{equation*}
    D^*(W^0)\eta \cdot \eta\geq\theta_M|\eta|^2
\end{equation*}
 for all $\eta \in \mathbb{R}^2$ almost everywhere in $\timeint\times\Omega$, where the constant $\theta_M>0$ depends only on the choice of $M$.

The Problem \eqref{kp1}--\eqref{kp3} is a standard linear parabolic problem.
Benefiting of Assumptions \ref{A1}--\ref{A4} and of the uniform positivity of $D^*(W^0)$, the existence of a unique weak solution  $u^{1}\in \mathcal{U}$ to the Problem \eqref{kp1}--\eqref{kp3} with $k=0$ can be shown via standard Galerkin approximation arguments (see, e.g., \cite[Chapter 7, Theorem 3]{evans2010partial}). The estimate 
 \begin{equation}\label{u1globalbound}
  \|u^{1}\|_{L^{\infty}(\timeint\times\Omega)}\leq \|g\|_{L^\infty(\Omega)}+T\|f\|_{L^{\infty}(\timeint\times\Omega)},
\end{equation}
follows by an application of Duhamel's principle \cite[Chapter 5]{precup2012linear}, for details see \cite[Lemma 10]{EDEN2022103408}.
\par The solvability of the Problem \eqref{kc1}--\eqref{kp3} for any  $k\in \mathbb{N}$  follows by induction. Indeed, assume $u^k\in\mathcal{U}$ satisfies $u^k(0,\cdot)=g$ and
\begin{equation}\label{xT1e1}
    \|u^k\|_{L^{\infty}(\timeint\times\Omega)}\leq m,
\end{equation}
it implies
\begin{equation}\label{xT1e2}
    G_i(u^k(t,x)) \in [-M,M], \quad i=1,2.
\end{equation}
Then the existence of $W^k\in L^\infty(\timeint\times\Omega;\mathcal{W}^2)$ solving \eqref{kc1}--\eqref{kc4} follows by Lemma \ref{L1} in the same way as for $W^0$.
Relying on Lemma \eqref{L2} and \eqref{xT1e2}, we obtain that $D^*(W^k)$ is bounded and that for all $\eta \in \mathbb{R}^2$ it satisfies
\begin{equation*}
    D^*(W^k)\eta \cdot \eta\geq\theta_M|\eta|^2
\end{equation*}
 almost everywhere in $\timeint\times\Omega$, where $\theta_M>0$ does not depend on $k$.
 Following the same arguments as before, we get the existence and uniqueness of $u^{k+1}\in\mathcal{U}$. 
 The estimate 
 \begin{equation*}
  \|u^{k+1}\|_{L^{\infty}(\timeint\times\Omega)}\leq \|g\|_{L^\infty(\Omega)}+T\|f\|_{L^{\infty}(\timeint\times\Omega)},
\end{equation*}
follows similarly to \eqref{u1globalbound}.

\end{proof}

\subsection{Convergence of the iterative scheme}
Now, we show that the sequence $(u^k,W^k)_{k\in \mathbb{N}\cup \{0\}}$ satisfies uniform energy estimates.
These estimates are crucial in establishing the convergence of our scheme. 
We also present a proposition that will help us pass to the limit in the weak formulation.
\begin{lemma}\label{L4}
  Assume \ref{A1}--\ref{A4} hold and 
  let $u^{k}\in \mathcal{U}$ be weak solution to the iterative scheme \eqref{kc1}--\eqref{kp3}.
  Then, there exists a constant $C>0$ independent of $k$ such that
 \begin{subequations}
    \begin{align}
    \|u^{k}\|_{L^{\infty}(\timeint;L^2(\Omega))}&\leq C,\label{T2e1}\\ 
    \|\nabla u^{k}\|_{L^{2}(\timeint;L^2(\Omega))}&\leq C,\label{T2e2}\\
    \|\partial_t u^{k}\|_{L^{2}(\timeint;H^{-1}(\Omega))}&\leq C\label{T2e3}.
    \end{align}
  \end{subequations}
\end{lemma}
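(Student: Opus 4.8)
The estimates \eqref{T2e1}--\eqref{T2e3} are the standard energy estimates for a linear parabolic equation, and the whole point is that all constants are independent of $k$ because, by Lemma \ref{L2}, the dispersion tensor $D^*(W^k)$ is bounded and coercive with a coercivity constant $\theta_M$ that does not depend on $k$ (this uses the uniform $L^\infty$ bound \eqref{T1e1} on $u^k$ from Theorem \ref{theorem_well_posedness}, which forces $G_i(u^k)\in[-M,M]$ with $M$ independent of $k$). So the first thing I would do is record: there exist $0<\theta_M\le \Lambda<\infty$, independent of $k$, with $\theta_M|\eta|^2\le D^*(W^k)\eta\cdot\eta$ and $|D^*(W^k)|\le \Lambda$ a.e.\ in $\timeint\times\Omega$.

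The core of the argument is to test the weak formulation \eqref{wfs1} with $\phi=u^{k+1}(t,\cdot)$ for a.e.\ $t$. This gives
\begin{equation*}
  \frac{1}{2}\frac{d}{dt}\|u^{k+1}(t)\|_{L^2(\Omega)}^2 + \int_\Omega D^*(W^k)\nabla u^{k+1}\cdot\nabla u^{k+1}\di{x} = \int_\Omega f\,u^{k+1}\di{x}.
\end{equation*}
Using coercivity of $D^*(W^k)$ on the second term, Cauchy--Schwarz and Young's inequality on the right-hand side (absorbing nothing gradient-wise, since the right side has no gradient; instead bounding $\int_\Omega f u^{k+1}\le \tfrac12\|f\|_{L^2(\Omega)}^2 + \tfrac12\|u^{k+1}\|_{L^2(\Omega)}^2$), and then integrating in time and applying Grönwall's inequality with the initial datum $\|u^{k+1}(0)\|_{L^2(\Omega)}=\|g\|_{L^2(\Omega)}$, yields \eqref{T2e1}: $\|u^{k+1}\|_{L^\infty(\timeint;L^2(\Omega))}\le C(T,\|g\|_{L^2(\Omega)},\|f\|_{L^2(\timeint\times\Omega)})$, with $C$ depending on $\theta_M$ but not on $k$. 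Feeding this bound back into the time-integrated identity and using coercivity once more on $\int_0^T\int_\Omega D^*(W^k)\nabla u^{k+1}\cdot\nabla u^{k+1}$ gives $\theta_M\|\nabla u^{k+1}\|_{L^2(\timeint;L^2(\Omega))}^2\le C$, which is \eqref{T2e2}. (Assumption \ref{A4} gives $f\in L^\infty\subset L^2(\timeint\times\Omega)$ and $g\in L^\infty\subset L^2(\Omega)$, so these norms are finite.)

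For \eqref{T2e3}, I would estimate $\partial_t u^{k+1}$ in $L^2(\timeint;H^{-1}(\Omega))$ directly from the equation: for $\phi\in H_0^1(\Omega)$ with $\|\phi\|_{H_0^1}\le 1$,
\begin{equation*}
  |\langle\partial_t u^{k+1},\phi\rangle| \le \Big|\int_\Omega D^*(W^k)\nabla u^{k+1}\cdot\nabla\phi\di{x}\Big| + \Big|\int_\Omega f\phi\di{x}\Big| \le \Lambda\|\nabla u^{k+1}\|_{L^2(\Omega)} + \|f\|_{L^2(\Omega)},
\end{equation*}
using the uniform boundedness of $D^*(W^k)$; squaring, integrating in $t$, and invoking \eqref{T2e2} gives \eqref{T2e3}. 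None of the steps is genuinely hard; the only thing that requires care — and the place where the whole iterative construction pays off — is making sure that every constant that appears is traced back to $\theta_M$, $\Lambda$, $\|f\|$, $\|g\|$ and $T$ only, and never to $k$. The uniformity of $\theta_M$ in $k$ is exactly the content of Lemma \ref{L2} combined with the $k$-independent $L^\infty$ bound of Theorem \ref{theorem_well_posedness}, so I would state that dependence explicitly at the start and then the estimates follow by the textbook parabolic argument (cf.\ \cite[Chapter 7]{evans2010partial}).
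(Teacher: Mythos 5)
Your argument is correct and is essentially the paper's proof: the paper likewise first records that Lemma \ref{L2}(i) together with the uniform bound \eqref{T1e1} gives a coercivity constant $\theta_M$ independent of $k$, and then simply cites the standard parabolic energy estimates (Evans, Chapter 7, Theorem 2) that you have written out in detail. No discrepancies to report.
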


\begin{proof}
Using $(i)$ of Lemma \ref{L2} and \eqref{T1e1}, there exist  $\theta_M>0$ independent of $k$, depending on $M$ such that 
\begin{equation*}
    D^*(W^k)\eta \cdot \eta\geq\theta_M|\eta|^2
\end{equation*}
 for all $\eta \in \mathbb{R}^2$. 
 Estimates \eqref{T2e1}--\eqref{T2e3} then follow from the arguments made in  \cite[Chapter 7, Theorem 2]{evans2010partial}. 
\end{proof}
\begin{lemma}\label{Lemma:c2holder bound for uk}
     Assume \ref{A1}--\ref{A4} hold and 
  let $u^{k}\in \mathcal{U}$ be weak solution to the iterative scheme \eqref{kc1}--\eqref{kp3}.
  Then, there exists a constant $C>0$ independent of $k$ and $0<\gamma<1$  such that
  \begin{equation}\label{eq:c2holderbound}
      |u^k|_{2+\gamma}\leq C,
  \end{equation}
  where $|\cdot|_{2+\gamma}$ is the Hölder norm (see \cite[Chapter 4]{lieberman1996second} for the detailed definition of the norm).
\end{lemma}
\begin{proof}
Using the main theorem from \cite{naumann2005global} (see also \cite{naumann2005interior}) and \eqref{T2e2}, we get that there exist a $p>2$, such that
\begin{equation}\label{eq:higher integrability}
     \|\nabla u^{k}\|_{L^{2}(\timeint;L^p(\Omega))}\leq C,
\end{equation}
where the constant $C>0$ independent of $k$.
Since $\Omega\subset \mathbb{R}^2$ and $p>2$, from Morrey's inequality (see \cite[Theorem 9.12]{brezis2011functional}) and \eqref{eq:higher integrability}, we obtain 
 \begin{equation}\label{eq:holder continuity for uk}
     |u^k|_{\gamma}\leq C,
 \end{equation}
 where $C$ is independent of $k$ and $|\cdot|_{\gamma}$ is the Hölder norm with coefficient $\gamma:=1-\frac{2}{p}$.
Using Lemma \ref{L3}, and \ref{A2},  we obtain
\begin{align}
    |[D^*(W^{k})(t,x_1) -D^*(W^{k})(t,x_2)]_{i,j}|\leq C|u^k(t,x_1)-u^{k}(t,x_2)|,
\end{align}
for $i,j\in\{1,2\}$.
From the aforementioned estimates, we get
\begin{equation}\label{eq:holder estimate for dstar}
     |[D^*(W^k)(t,x)]_{i,j}|_{\gamma}\leq C,
\end{equation}
 where $C$ independent of $k$. Now, using the Schauder regularity theorem \cite[Theorem 5.14]{lieberman1996second} (see also \cite{lunardi1998schauder}) together with \ref{A4} and \eqref{eq:holder estimate for dstar} for \eqref{wfs1}, we obtain \eqref{eq:c2holderbound}.
\end{proof}
Note that from Lemma \ref{Lemma:c2holder bound for uk} we have the following inequality
\begin{equation}\label{eq:gradient global bound}
    \|\nabla u^k\|_{L^\infty((0,T)\times\Omega)}\leq C,
\end{equation}
where $C>0$ independent of $k$.
\begin{proposition}\label{lemma:productofsequence}
    Let $\xi_k,\psi_k,\xi,\psi\in L^2(\Omega)$ for all $k\in \mathbb{N}$. Assume 
    \begin{alignat}{4}
        \xi_k&\rightarrow\xi\hspace{1cm}&& \mbox{strongly in} \hspace{1cm}&&L^2(\Omega),\label{Eq:xi_n convergence}\\
        \psi_k&\rightharpoonup\psi&& \mbox{weakly in} &&L^2(\Omega)\label{Eq:psi_n convergence}
    \end{alignat}
    and there exist a $C>0$ independent of $k$, such that 
    \begin{equation}\label{Eq:xi_k estimate}
        \|\xi_k\|_{L^{\infty}(\Omega)}\leq C,
    \end{equation}
    then for any $\phi\in L^2(\Omega)$, we have
    \begin{equation}\label{eq:xi_n psi_n weak convergence l2}
    \lim_{k\rightarrow\infty}\int_{\Omega}\xi_k\psi_k\phi \di{x}\rightarrow \int_{\Omega}\xi\psi\phi \di{x}.
    \end{equation}
\end{proposition}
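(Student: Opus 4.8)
The plan is to run a standard \emph{weak-times-strong} convergence argument, with one structural choice: the cut-off factor $\phi$ should be absorbed into the strongly convergent, uniformly bounded sequence $(\xi_k)$ rather than carried along separately. This matters because the pointwise product of two merely $L^2$ functions need not be $L^2$, so the quantity $\xi_k\psi_k\phi$ must be read as the $L^2$-pairing of $\psi_k$ with $\xi_k\phi$.

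First I would pass to a subsequence (not relabeled) along which $\xi_k\to\xi$ almost everywhere in $\Omega$; this is possible by \eqref{Eq:xi_n convergence}. Combining the a.e.\ convergence with the uniform bound \eqref{Eq:xi_k estimate} gives $\|\xi\|_{L^\infty(\Omega)}\le C$, so in particular $\xi\phi\in L^2(\Omega)$ for the fixed $\phi\in L^2(\Omega)$. Next I would upgrade the convergence of the products: since
\begin{equation*}
    |\xi_k\phi-\xi\phi|^2=|\xi_k-\xi|^2|\phi|^2\longrightarrow 0 \quad\text{a.e. in }\Omega,
\end{equation*}
and $|\xi_k-\xi|^2|\phi|^2\le 4C^2|\phi|^2\in L^1(\Omega)$ by the uniform bound, the dominated convergence theorem yields $\xi_k\phi\to\xi\phi$ strongly in $L^2(\Omega)$.

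To conclude, I would use that the weakly convergent sequence $(\psi_k)$ is bounded in $L^2(\Omega)$, say by $C'$, and split
\begin{equation*}
    \int_\Omega \xi_k\psi_k\phi\di{x}-\int_\Omega \xi\psi\phi\di{x}
    =\int_\Omega \psi_k\,(\xi_k\phi-\xi\phi)\di{x}+\int_\Omega (\psi_k-\psi)\,\xi\phi\di{x}.
\end{equation*}
The first term is bounded by $C'\|\xi_k\phi-\xi\phi\|_{L^2(\Omega)}\to 0$ by the previous step, while the second tends to $0$ by \eqref{Eq:psi_n convergence}, since $\xi\phi\in L^2(\Omega)$. This gives \eqref{eq:xi_n psi_n weak convergence l2} along the chosen subsequence.

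The only genuinely delicate point — and the step I would be most careful with — is the strong $L^2$-convergence of $\xi_k\phi$: one cannot estimate $\|(\xi_k-\xi)\phi\|_{L^2(\Omega)}$ by a product of $L^2$-norms, and must instead convert the $L^\infty$-bound into an integrable dominating function and invoke dominated convergence on an a.e.-convergent subsequence. Everything else is bookkeeping, and this reliance on an a.e.-convergent subsequence is precisely why the conclusion is only claimed up to a subsequence.
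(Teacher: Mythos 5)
Your proof is correct, and it takes a genuinely different route from the paper's. The paper first observes that strong-times-weak convergence gives $\xi_k\psi_k\rightharpoonup\xi\psi$ weakly in $L^1(\Omega)$, hence convergence of the integrals against $\phi\in C_c^\infty(\Omega)$; it then uses the uniform bound \eqref{Eq:xi_k estimate} to get an $L^2$-bound on the products $\xi_k\psi_k$, extracts a weakly $L^2$-convergent subsequence, and identifies its limit with $\xi\psi$ to upgrade the class of admissible $\phi$ to all of $L^2(\Omega)$ --- this extraction is where the paper's ``up to a subsequence'' enters. You instead absorb $\phi$ into the strongly convergent factor: an a.e.-convergent subsequence plus the bound \eqref{Eq:xi_k estimate} and dominated convergence give $\xi_k\phi\to\xi\phi$ strongly in $L^2(\Omega)$, and the two-term splitting
\begin{equation*}
\int_\Omega \psi_k\,(\xi_k\phi-\xi\phi)\di{x}+\int_\Omega (\psi_k-\psi)\,\xi\phi\di{x}
\end{equation*}
finishes the argument using only boundedness of $(\psi_k)$ and the definition of weak convergence. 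Your version is more elementary and self-contained (no weak $L^1$ convergence of products, no weak compactness or identification-of-limits step), and with a standard subsequence-of-subsequences argument it would even yield convergence of the full sequence of integrals; the paper's version, on the other hand, delivers the slightly stronger structural fact that $\xi_k\psi_k\rightharpoonup\xi\psi$ weakly in $L^2(\Omega)$ along the subsequence, not just convergence of the scalar quantities \eqref{eq:xi_n psi_n weak convergence l2}. Either argument suffices for the way the proposition is used in the proof of \cref{theorem_existance_of_p(omega)}.
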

\begin{proof}
    Using \eqref{Eq:xi_n convergence} and \eqref{Eq:psi_n convergence}, we get 
    \begin{equation*}
        \xi_k\psi_k\;\rightharpoonup\;\xi\psi \;\;\;\mbox{weakly in}\;\; L^1(\Omega).
    \end{equation*}
    Hence for all $\phi\in C_c^\infty({\Omega})$, we have
     \begin{equation}\label{eq:weak l1 convergence}    \lim_{k\rightarrow\infty}\int_{\Omega}\xi_k\psi_k\phi \di{x}\rightarrow \int_{\Omega}\xi\psi\phi \di{x}.
    \end{equation}
    Now, from \eqref{Eq:psi_n convergence} and \eqref{Eq:xi_k estimate}, there exist a constant $C>0$ independent  of $k$, such that
    \begin{equation}
        \|\xi_k\psi_k\|_{L^2(\Omega)}\leq C.
    \end{equation}
    So, by the weak compactness theorem, we get $\xi_k\psi_k$ weakly converges to some $h\in L^2(\Omega)$ in $L^2(\Omega)$. Now, combining with \eqref{eq:weak l1 convergence}, we get $h=\xi\psi$ for a.e. $x\in \Omega$, which give us the required result \eqref{eq:xi_n psi_n weak convergence l2}.
\end{proof}
\begin{lemma}\label{lemma:cauchy}
     Assume \ref{A1}--\ref{A4} hold.    
  Let $u^{k}\in \mathcal{U}$ be the weak solution to the iterative scheme \eqref{kc1}--\eqref{kp3}, then $u^k $ is a Cauchy sequence in  $L^2(0,T;L^2(\Omega))$.
\end{lemma}
\begin{proof}
   Let $k\in\mathbb{N}$, we have
    \begin{align}
       \langle \partial_t u^{k+1}, \phi \rangle+\int_{\Omega} D^*(W^{k}) \nabla u^{k+1} \cdot\nabla \phi \di{x} &=  \int_{\Omega} f \phi \di{x}\label{eq:k+1 step}\\
       \langle \partial_t u^{k}, \phi \rangle+\int_{\Omega} D^*(W^{k-1}) \nabla u^{k} \cdot\nabla \phi \di{x} &=  \int_{\Omega} f \phi \di{x}\label{eq:k step},
\end{align}
for all $\phi\in H_0^1(\Omega)$.
Now, subtracting \eqref{eq:k step} from \eqref{eq:k+1 step} and choosing $\phi=(u^{k+1}-u^{k})$, we obtain
\begin{align}
    \frac{1}{2}\frac{d}{dt} \|u^{k+1}-u^{k}\|_{L^2(\Omega)}^2+ \int_{\Omega} (D^*(W^{k}) \nabla u^{k+1}-D^*(W^{k-1}) \nabla u^{k}\cdot\nabla (u^{k+1}-u^{k}) \di{x}=0.
\end{align}
Adding and subtracting $\int_{\Omega} D^*(W^{k}) \nabla u^{k}\cdot\nabla (u^{k+1}-u^{k}) \di{x}$, we get
\begin{align}
    \frac{1}{2}\frac{d}{dt} \|u^{k+1}-u^{k}\|_{L^2(\Omega)}^2+\theta \|\nabla u^{k+1}-\nabla u^{k}\|_{L^2(\Omega)}^2 \leq \int_{\Omega} (D^*(W^{k-1}) -D^*(W^{k}) )\nabla u^{k}\cdot \nabla (u^{k+1}-u^{k}) \di{x}.
\end{align}
 Recalling that the functions $G_i$ are locally Lipschitz (Assumption \ref{A2}), i.e.,
    \begin{equation}
         |G_i(u^{k})-G_i(u^{k-1})|\leq C |u^{k}-u^{k-1}|,\label{xT3e1}
    \end{equation}
    where $C>0$ is independent of $k$ since $|u^k|, |u^{k-1}|\leq m$. Using \eqref{xT3e1}, we get
    \begin{align}
    |D^*(W^{k-1}) -D^*(W^{k})|\leq C|u^k-u^{k-1}|.
\end{align}

Using the aforementioned estimates and \eqref{eq:gradient global bound}, we obtain
 \begin{align}
      \frac{1}{2}\frac{d}{dt} \|u^{k+1}-u^{k}\|_{L^2(\Omega)}^2+\theta \|\nabla u^{k+1}-\nabla u^{k}\|_{L^2(\Omega)}^2 \leq C\int_{\Omega} |u^k-u^{k-1}| |\nabla (u^{k+1}-u^{k})| \di{x},
 \end{align}
 where $C>0$ independent of $k$.
 Using Young's inequality and $u^{k+1}(0,\cdot)=u^k(0,\cdot)$, we get after integrating over time twice for any $t\in(0,T)$: 
\begin{align}
      \|u^{k+1}-u^{k}\|_{L^2(0,t;\Omega)}^2+\theta t \|\nabla(u^{k+1}-u^{k})\|_{L^2((0,t;\Omega)}^2 \leq Ct\|u^k-u^{k-1}\|_{L^2(0,t;\Omega)}^2.\label{eq:timeder of uk+1-uk}
 \end{align}
 Choosing $t^*=\min\{\nicefrac{2}{C},T\}$ yields
 \begin{equation}\label{eq:cauchy estimate}
     \|u^{k+1}-u^{k}\|_{L^2(0,t^*;L^2(\Omega))}\leq \frac{1}{2} \|u^{k}-u^{k-1}\|_{L^2(0,t^*;L^2(\Omega))}.
 \end{equation}
 Thus, we conclude $u^{k}$ is Cauchy in $L^2(0,t^*;L^2(\Omega))$.
 By discretizing $(0,T)$ in small intervals $0<t^*<2t^*<\cdots<T$, in a similar argument that leads to \eqref{eq:cauchy estimate}, we get $u^{k}$ is Cauchy in $L^2(t^*,2t^*;L^2(\Omega))$. Since we have $u^k\in C((0,T)\times \Omega)$,
 and using continuity argument we extend the result \eqref{eq:cauchy estimate} in $(0,T)\times \Omega$, i.e., we get $u^{k}$ is Cauchy in $L^2(0,T;L^2(\Omega))$ (for details see \cite[Remark in Section 2]{lyons2023phase} and \cite[Section 7]{amann2011ordinary} ). 
 \end{proof}

\begin{theorem}[Solvability of $P(\Omega)$]\label{theorem_existance_of_p(omega)}Assume \ref{A1}--\ref{A4} hold.
Then, there exists a $u\in \mathcal{U}$ and a $W\in L^\infty(\timeint\times\Omega;\mathcal{W}^2)$ such that,
     \begin{subequations}   
     \begin{alignat}{4}
         u^{k}&\rightarrow u \hspace{1cm} &&\mbox{strongly in} \hspace{.5cm} &&L^2(\timeint\times\Omega)\label{T3e1},\\
         D^*(W^k) &\rightarrow D^*( W )\hspace{1cm} &&\mbox{strongly in} \hspace{.5cm} &&L^2(\timeint\times\Omega)\label{T3e4},\\
         \nabla u^{k}&\rightharpoonup\nabla u \hspace{1cm} &&\mbox{weakly in} \hspace{.5cm} &&L^2(\timeint\times\Omega)\label{T3e2},\\
          \partial_t u^{k}&\rightharpoonup\partial_t u \hspace{1cm} &&\mbox{weakly in} \hspace{.5cm} &&L^2(\timeint;H^{-1}(\Omega))\label{T3e3}.
     \end{alignat}
     Moreover, $(u,W)$ is a weak solution to the nonlinear parabolic-elliptic system \eqref{homeq1}--\eqref{homeqf} in the sense of \cref{D1} and $u\in L^\infty(\timeint\times\Omega)$.
      \end{subequations}
\end{theorem}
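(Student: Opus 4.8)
The plan is to extract convergent subsequences from the uniform bounds established in \cref{L4}, identify the limit, and then pass to the limit in the weak formulation of Problem $P^k(\Omega)$. First, \cref{L4} provides $k$-independent bounds on $u^k$ in $L^\infty(\timeint;L^2(\Omega))$, on $\nabla u^k$ in $L^2(\timeint\times\Omega)$, and on $\partial_t u^k$ in $L^2(\timeint;H^{-1}(\Omega))$; hence $(u^k)$ is bounded in the space $\mathcal{U}$. By the Banach--Alaoglu theorem we obtain weak limits giving \eqref{T3e2} and \eqref{T3e3}, and by the Aubin--Lions--Simon compactness lemma (using the compact embedding $H_0^1(\Omega)\hookrightarrow L^2(\Omega)$ together with the bound on the time derivative) we get a subsequence with $u^k\to u$ strongly in $L^2(\timeint\times\Omega)$, i.e. \eqref{T3e1}. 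The uniform $L^\infty$-bound \eqref{T1e1} passes to the limit (by lower semicontinuity / a.e. convergence along a further subsequence), giving $u\in L^\infty(\timeint\times\Omega)$ with the same bound, and in particular $G_i(u)\in[-M,M]$ almost everywhere.

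Next I would establish the strong convergence \eqref{T3e4} of the dispersion tensors. Since $u^k\to u$ strongly in $L^2(\timeint\times\Omega)$, along a subsequence $u^k(t,x)\to u(t,x)$ for a.e. $(t,x)$, and since all $u^k$ and $u$ take values in $[-m,m]$, the local Lipschitz continuity of $G_i$ from \ref{A3} gives $G_i(u^{k-1}(t,x))\to G_i(u(t,x))$ pointwise a.e. with a uniform bound. Define $W(t,x):=W_{G(u(t,x))}$ via the auxiliary problem of \cref{L1}. By the Lipschitz estimate \cref{L3} (or directly \cref{L1}(iii)), for a.e. $(t,x)$ we have $|[D^*(W^{k})]_{i,j}-[D^*(W)]_{i,j}|\le C|G_i(u^{k-1})-G_i(u)|\to0$, so $D^*(W^k)\to D^*(W)$ pointwise a.e.; combined with the uniform bound \eqref{L2e2} and dominated convergence over the bounded set $\timeint\times\Omega$, this yields \eqref{T3e4}. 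The uniform positivity from \cref{L2}(i) is preserved in the limit, so $D^*(W)$ is uniformly positive definite.

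The final step is to pass to the limit in the weak formulation \eqref{wfs1} of the reaction-dispersion problem. The only nonlinear term is $\int_\Omega D^*(W^k)\nabla u^{k+1}\cdot\nabla\phi\,\mathrm{d}x$: here I would invoke \cref{lemma:productofsequence} componentwise, taking $\xi_k$ to be (an entry of) $D^*(W^k)$, which converges strongly in $L^2$ by \eqref{T3e4} and is uniformly bounded by \eqref{L2e2}, and $\psi_k$ to be $\partial_{x_j}u^{k+1}$, which converges weakly in $L^2$ by \eqref{T3e2}; testing against $\partial_{x_i}\phi\in L^2$ gives convergence of this term to $\int_\Omega D^*(W)\nabla u\cdot\nabla\phi\,\mathrm{d}x$. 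The time-derivative term and the source term pass to the limit by \eqref{T3e3} and linearity, and the initial condition $u(0,\cdot)=g$ survives because $u\in\mathcal{U}\hookrightarrow C([0,T];L^2(\Omega))$ and $u^{k+1}(0,\cdot)=g$ for all $k$. For the cell-problem identity \eqref{wfcell}, I would use that $W$ was \emph{defined} as the solution of the auxiliary problem with parameter $G_i(u(t,x))$, so that identity holds by construction of \cref{L1}; alternatively one passes to the limit in the cell weak form using \cref{L1}(iii) and the pointwise convergence of $G_i(u^{k})$. Together these give that $(u,W)$ is a weak solution in the sense of \cref{D1}.

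I expect the main obstacle to be the passage to the limit in the nonlinear dispersion term, specifically the bookkeeping between the index shifts ($u^{k+1}$ in the macro equation versus $W^k$ built from $u^k$): one must ensure the strong $L^2$ convergence of the full sequence $D^*(W^k)$ — which rests on pointwise a.e. convergence of $G_i(u^k)$ and the Lipschitz-in-$p$ estimate \cref{L1}(iii)/\cref{L3} — is matched against the weak convergence of $\nabla u^{k+1}$, and that both the strong and weak limits are the limit objects of the \emph{same} subsequence. Extracting a single subsequence along which all of \eqref{T3e1}--\eqref{T3e4} and the pointwise a.e. convergences hold simultaneously (via a diagonal argument) is the technical heart of the argument; once that is in place, \cref{lemma:productofsequence} makes the limit passage routine.
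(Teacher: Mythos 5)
Your proposal is correct and follows essentially the same route as the paper: uniform estimates from \cref{L4} plus Aubin--Lions and weak compactness for \eqref{T3e1}--\eqref{T3e3}, definition of $W$ through the auxiliary problem of \cref{L1} at parameter $G_i(u)$, the Lipschitz estimate of \cref{L3} combined with the local Lipschitz property of $G_i$ and the strong convergence of $u^k$ to obtain \eqref{T3e4}, and \cref{lemma:productofsequence} to pass to the limit in the dispersion term of \eqref{wfs1}. The only cosmetic difference is that you derive \eqref{T3e4} via pointwise convergence and dominated convergence, whereas the paper estimates the $L^2$ norm directly; your closing remark on the index shift and on fixing a single subsequence is a fair point that the paper treats only implicitly.
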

\begin{proof}
    From Lemma \ref{lemma:cauchy}, we get \eqref{T3e1} for some $u\in L^2(\timeint\times\Omega)$.
    Using the weak compactness property (see  \cite[Appendix D, Theorem 3]{evans2010partial}) with the energy estimates \eqref{T2e2} and \eqref{T2e3}, we get both \eqref{T3e2} and \eqref{T3e3} along a subsequence.
    Since $u^k$ is Cauchy, these convergences hold for the full sequences.
    From \eqref{T3e1}, it also follows that $u^k\rightarrow u$ pointwise almost everywhere in $\timeint\times\Omega$.
    Hence, we obtain $u\in L^\infty(\timeint\times\Omega)$ and, more precisely, $|u|\leq m$.    
    Via \cref{L1}, we get the existence of the weak solution $w_i(p_i,\cdot)\in \mathcal{W}$ for \eqref{cell3}--\eqref{homeqf} where $p_i=G_i(u(t,x))$, $i\in \{1,2\}$, for almost all $(t,x)\in\timeint\times\Omega$.
    This then extends to a function $W=(w_1,w_1)\in L^\infty(\timeint\times\Omega;\mathcal{W}^2)$ via \cref{L1} (ii).
    
    Recalling \eqref{xT3e1}, we can use the estimates \eqref{p1} along with \eqref{T3e1}  to show
    \begin{align}
        \lim_{k\rightarrow \infty}\int_0^T\int_\Omega |[D^*(W^k)-D^*(W)]_{i,j}|^2\di{x}&=  \lim_{k\rightarrow \infty}\int_0^T\int_\Omega |[\overline{D}(G_i(u^{k})))-\overline{D}(G_i(u))]_{i,j}|^2\di{x}\nonumber\\
        &\leq C  \lim_{k\rightarrow \infty}\int_0^T\int_\Omega |G_i(u^{k})-G_i(u)|^2 \di{x}\nonumber\\
        &\leq C  \lim_{k\rightarrow \infty}\int_0^T\int_\Omega |u^{k}-u|^2 \di{x}\nonumber\\
        &=0.\label{T3e5}
    \end{align}
 Using \eqref{T3e2} and \eqref{T3e4} together with \cref{lemma:productofsequence}, for any $\phi\in H_0^1(\Omega)$ we have
    \begin{equation}
        \lim_{k\rightarrow \infty}\int_{\Omega}D^*(W^k)\nabla u^{k+1}\cdot\nabla \phi \di{x}= \int_{\Omega}D^*(W)\nabla u\cdot\nabla \phi \di{x}.\label{T3e6}
    \end{equation}
Combining  \eqref{T3e3} and \eqref{T3e6} and then passing $k\rightarrow\infty$ in \eqref{wfs1}, we see that $u$ satisfies \eqref{wf}.
Hence, we have that $(u,W)$  solves Problem $P(\Omega) $ in the sense of Definition \ref{D1}.
\end{proof}
\begin{lemma}[Uniqueness]\label{lemma:uniq}
    Assume \ref{A1}--\ref{A4} hold. Then the weak solution in the sense of \cref{D1} is unique.
\end{lemma}
\begin{proof}
    From \eqref{eq:c2holderbound} and \eqref{eq:gradient global bound} , we have $\nabla u^k\to \nabla u$ strongly in $L^2(\timeint\times \Omega)$ and $\nabla u\in{L^{\infty}(\timeint\times\Omega)}.$
  Then, uniqueness follows via standard arguments.
  Indeed, if $u_1,u_2$ solve Problem $P(\Omega)$, then by using the Lipschitz continuity of $D^*(W)$ (see Lemma \eqref{L2}) and standard energy estimates for $u_1-u_2$, we can conclude that $u_1=u_2$ for almost everywhere in $\timeint\times\Omega$.
  \end{proof}
\section{Numerical simulation}\label{section_simulation}
In this section, we demonstrate that the iteration scheme \eqref{Eq:IterationScheme} can be used as a numerical method to approximate the solution of Problem $P(\Omega)$. 
Additionally, we use this numerical method to illustrate the impact of different microscopic geometries and to investigate how the effective dispersion tensor, $D^*$, governs the macroscopic behavior of the system. 
To accomplish this, we make use of finite element method solvers in FEniCS \cite{logg2012automated,Langtangen2016} with Lagrange polynomials of degree one as the basis elements for both the elliptic and parabolic problems. 
We begin by discretizing the macroscopic and microscopic domains.
By first fixing an initial iteration $u^0$, we can then solve the linear elliptic cell problems given by \eqref{kc1}--\eqref{kc4} in the microscopic domain for each point in the macroscopic domain. 
Please note that this step can be perfectly parallelized as each cell problem is independent. 
By calculating $D^*(W^k)$ given by \eqref{kmacrodiff}, we can then solve the linear parabolic problem \eqref{kp1}--\eqref{kp3}, where we make use of an implicit Euler discretization to handle the time dependence. 
Then, using this numerical solution, we repeat these calculations for the next iteration. 
More concisely, given the numerical solution $u^k$, we calculate $u^{k+1}$ by solving the linear system \eqref{Eq:IterationScheme}. 
We continue this iteration method until a desired error tolerance between successive iterations (measured in the $L^2(0, T; L^2(\Omega))$ norm) is reached.
We outline this approach in Algorithm \ref{alg:cap}. 

To proceed with our investigation, we define two specific geometries (henceforth named Geometry 1 and 2) which have different and interesting effects on the effective dispersion tensor.
 For Geometry 1, we choose  $Y = (0,1)^2 \setminus  \mathcal{B}_{0.25}((0.5, 0.5))$ where  $\mathcal{B}_{r}((y_1, y_2))$ denotes the closed disk with radius $r$ and center $(y_1, y_2)$ (for a visualization, see the top row of Figure \ref{stokegeometry1}). 
 For Geometry 2, we let $Y = (0,1)^2 \setminus  (\mathcal{R}_{1} \cup \mathcal{R}_{2})$ with rectangles $\mathcal{R}_{1}:= [0.1, 0.9] \times [0.1, 0.2]$ and $\mathcal{R}_{2}:= [0.1, 0.9] \times [0.8, 0.9]$(for a visualization, see the bottom row of Figure \ref{stokegeometry1}). 
 
When choosing model ingredients, one difficulty is ensuring that the microscopic drift, $B(y)$, is both interesting, physically relevant, and satisfies the assumptions {\ref{A3}} for each geometry.
To accomplish this, we take the velocity field as the solution to the following Stokes problem:

 \begin{mybox}{Stokes problem for the drift velocity}
 \vspace{-.4cm}
 \begin{subequations}\label{Eq:Stokes_System}
 \begin{alignat}{2}
 \label{stoke1}- \mu \Delta B + \nabla p &= F(y) \quad & &\text{in}\;\; Y,\\
 \label{stoke2}\text{div}\, B &= 0 \;\; & &\text{in}\;\; Y,\\
\label{stoke3}  B &= 0 \;\; & &\text{on} \;\;\Gamma_N,\\
 \label{stoke4} y&\mapsto B(y) &\;&\text{is} \; Y\text{-periodic}, 
 \end{alignat}
  \end{subequations}
\end{mybox}
\noindent where $Y$ is either Geometry 1 or Geometry 2.
To ensure stability while solving  \eqref{Eq:Stokes_System}, we use the Taylor-Hood elements in FEniCS, where second-order polynomials are used as basis functions for the velocity field and first-order polynomials are used for the pressure. We refer the reader, for instance, to \cite{arnold1984stable} for more information on the use of stable finite elements for the computation of Stokes flow.

For all simulations to follow, we choose $\mu=0.01$ and
\[F:Y\to\mathbb{R}^2 \mbox{ given by }F(y) = (10\sin(2 \pi y_1) \sin(2\pi y_2), 10\sin(2 \pi y_1) \cos(2\pi y_2))\]
for $y=(y_1,y_2)\in Y$.
The velocity vector $B(y) = (B_{1}(y), B_{2}(y))$ corresponding to Geometry 1 and Geometry 2 is shown in Figure \ref{stokegeometry1}.

 \begin{figure}[h!]
	\centering
    \includegraphics[width=0.4\textwidth]{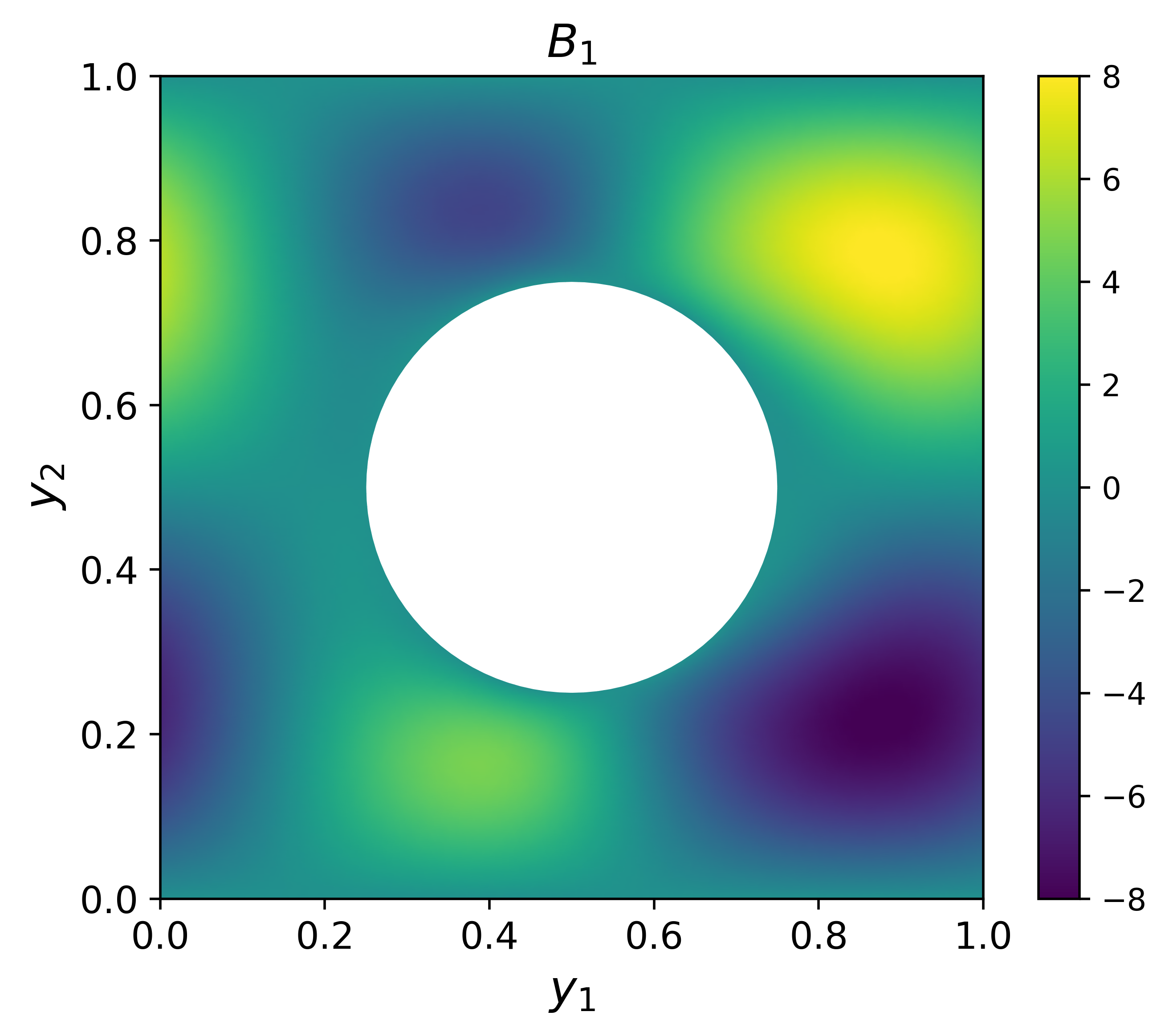}
	\hspace{0.5cm}
    \includegraphics[width=0.4\textwidth]{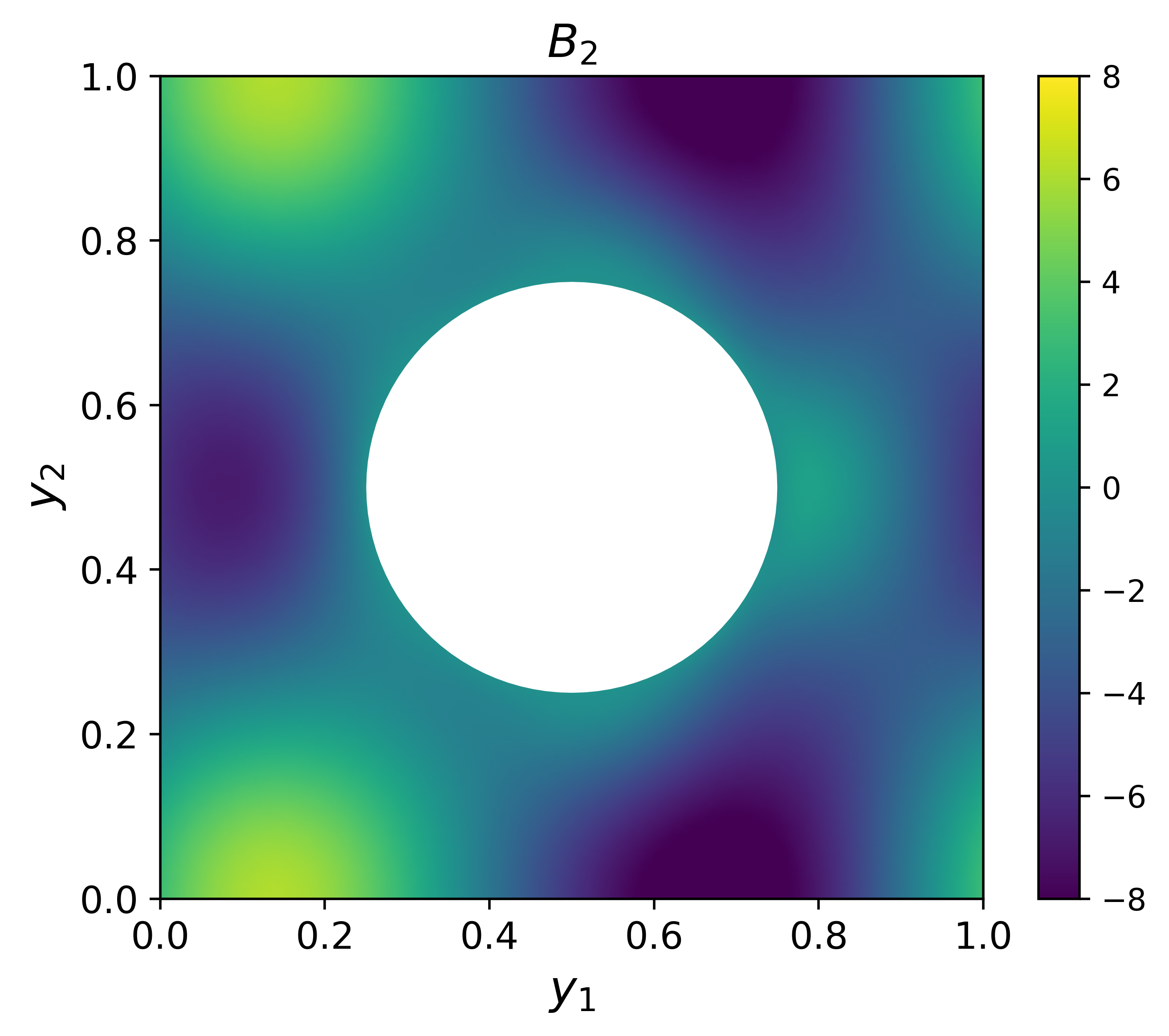}
    \includegraphics[width=0.4\textwidth]{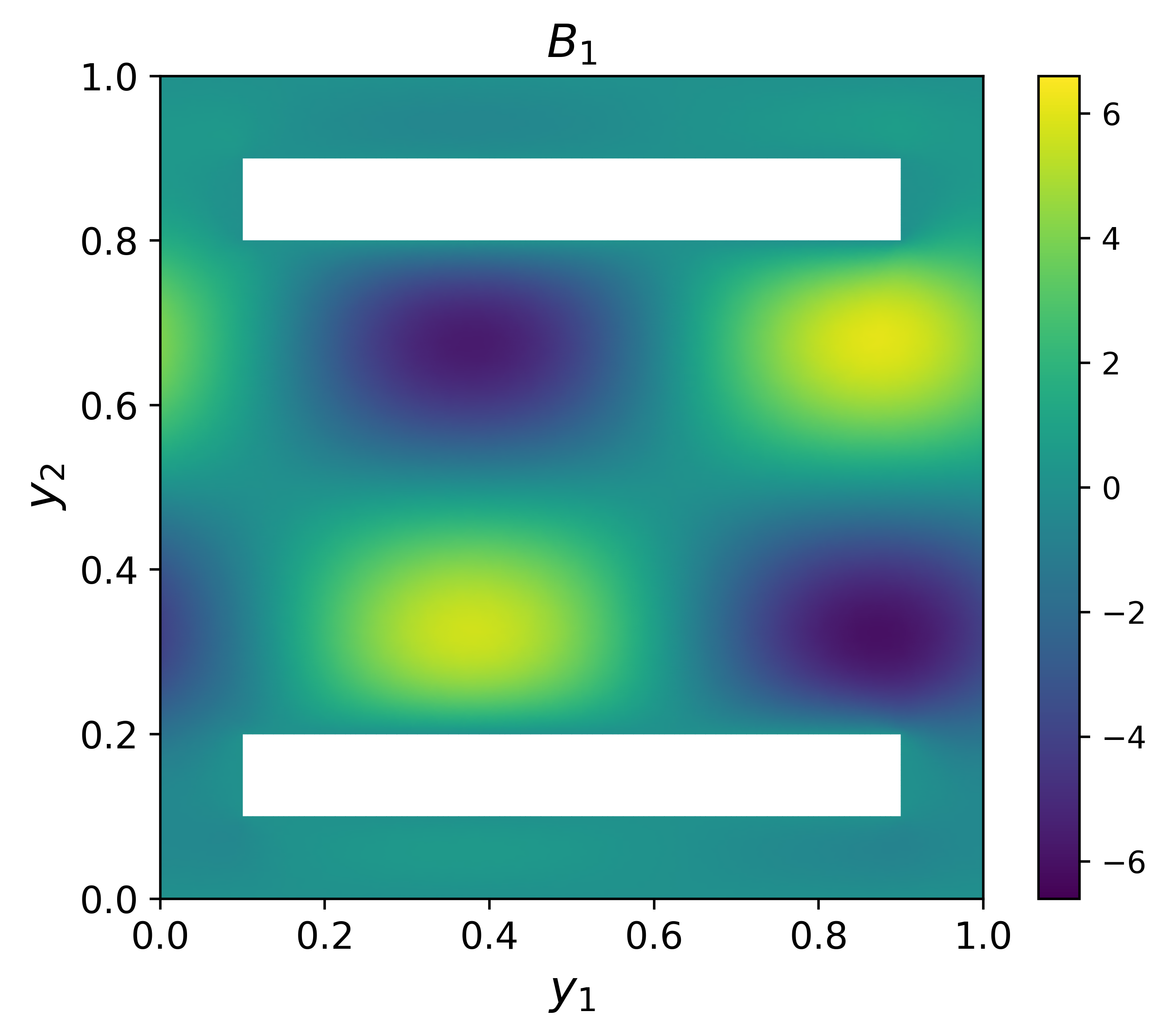}
	\hspace{0.5cm}
    \includegraphics[width=0.4\textwidth]{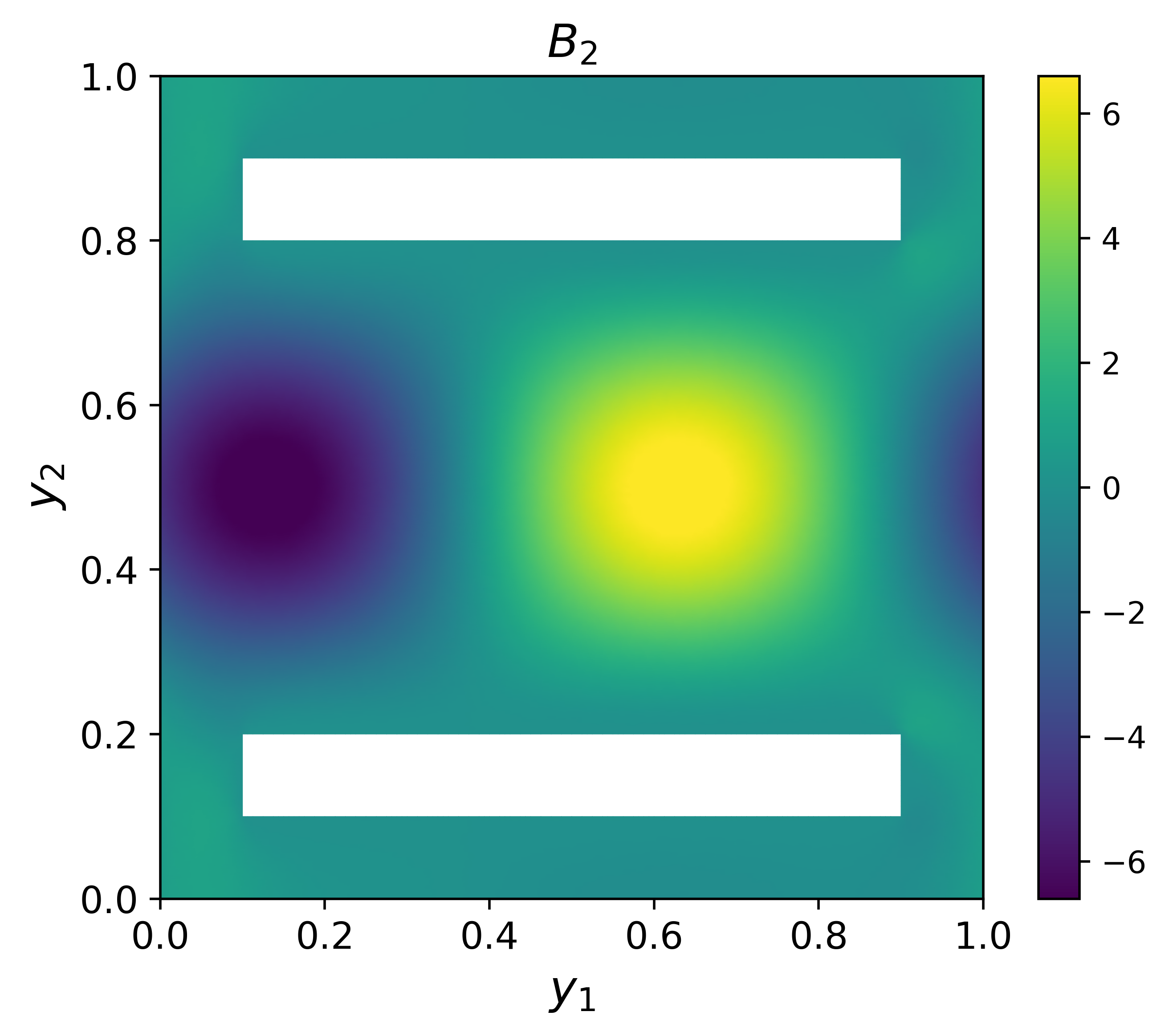}
	\caption{ Top: solution to the Stokes problem, $B_1(y)$ (left) and $B_2(y)$ (right), in Geometry 1. Bottom: solution to the Stokes problem, $B_1(y)$ (left) and $B_2(y)$ (right), in Geometry 2.}
	\label{stokegeometry1}
\end{figure}

 \begin{algorithm}[h!]
\caption{Procedure to compute the weak solution to \eqref{kc1}--\eqref{kp3}.} \label{alg:cap}
\begin{algorithmic}[1]
\State \texttt{Discretize the space micro domain $Y$ and macro domain $\Omega$}
\State \texttt{Discretize the time domain $[0, T]$ with step size $\Delta t$}
\State \texttt{Solve the Stokes problem \eqref{stoke1}-\eqref{stoke4} to get $B(y)$}
\State \texttt{Set initial iteration guess $u^0$}
\State \texttt{Choose data $f, D, G_i, g$}
\State \texttt{Set tolerance value $\epsilon$}
\State \texttt{Set the maximum number of iterations,  Maxiter.}
\State \texttt{Initialize iteration and time. i.e. $iter =0$, $t=0$}
\While{$ iter < Maxiter$}
\State \texttt{Set $u_{\text{old}}=g$}
\State \texttt{Set $u = [u_{\text{old}}]$}
\For{\texttt{each time discrete node  on time domain}}
\For{\texttt{each node on macroscopic grid}}
        \State \texttt{Solve for $(w_1, w_2)$ using $(G_1(u_{\text{old}}), G_2(u_{\text{old}}))$}
      \EndFor
      \State \texttt{Compute  $D^*$ from $(w_1, w_2)$}
       \State \texttt{Solve for $u_{\text{new}}$ using $D^*$}
        \State \texttt{append $u$ with $u_{\text{new}}$}
       \State \texttt{$u_{\text{old}} \leftarrow u_{\text{new}}$}
    \EndFor
      \If{$\|u-u^0\|< \epsilon$}
    \State \texttt{Stop}
\EndIf 
\State{$u^0\leftarrow u$}
\EndWhile
\end{algorithmic}
\end{algorithm}

\subsection{Dispersion tensor}\label{SSec:Influence_on_dispersion}
We now study the behavior of the effective dispersion tensor with respect to the nonlinear drift interactions in both geometries.
To begin with, we divide the interval $[-10, 10]$ into $101$ equidistant nodes denoted $p_i$.
We then solve the auxiliary cell problem given by \eqref{aux1}-\eqref{aux3} for each $p_i$.
  To enforce the fact that the cell solution has zero average, we similarly use the Lagrange multiplier method as in \cite{formaggia2002numerical}.
With the solutions to the cell problems,  we compute the entries of the effective dispersion tensor using Equation \eqref{macrodiff}.
These results are then interpolated and the components of $D^*$ are plotted as a function of $p\in[-10,10]$ in \cref{fastdiffusion,slowdiffusion}.
{The parameter $p$ scales the microscopic drift velocity (see System \eqref{system:cellproblem_with_p}), playing this way the role of a local (microscopic) Peclet-type number for a given velocity field $B$.
Variations in $p$ weaken or strengthen the macroscopic drift effect in a rather non-intuitive way.
Our numerical studies seem to indicate that geometry 1 is likely to lead to stronger dispersion effects at least for the given velocity field.
We show in Figure \ref{fastdiffusion} how such effects influence the entries of the dispersion tensor.
This type of observation is very similar to the discussion presented in Section 5 of \cite{ALLAIRE20102292}.}
\paragraph{Case 1: Fast diffusion}
We first consider the case where the microscopic diffusion is comparable in magnitude to the velocity field $B(y)$.
To this end, we choose the diffusion matrix
 \begin{align*}
 D(y) =
 \begin{pmatrix}
2 + \sin(\pi y_1)\sin(\pi y_2) & 0\\
0 & 2+ \sin(\pi y_1)
\end{pmatrix},\quad y=(y_1,y_2)\in Y,
 \end{align*}
which satisfies $1\leq \det D(y)\leq 9$ everywhere in $Y$.
\begin{figure}[h!]
	\centering		
    \includegraphics[width=0.45\textwidth]{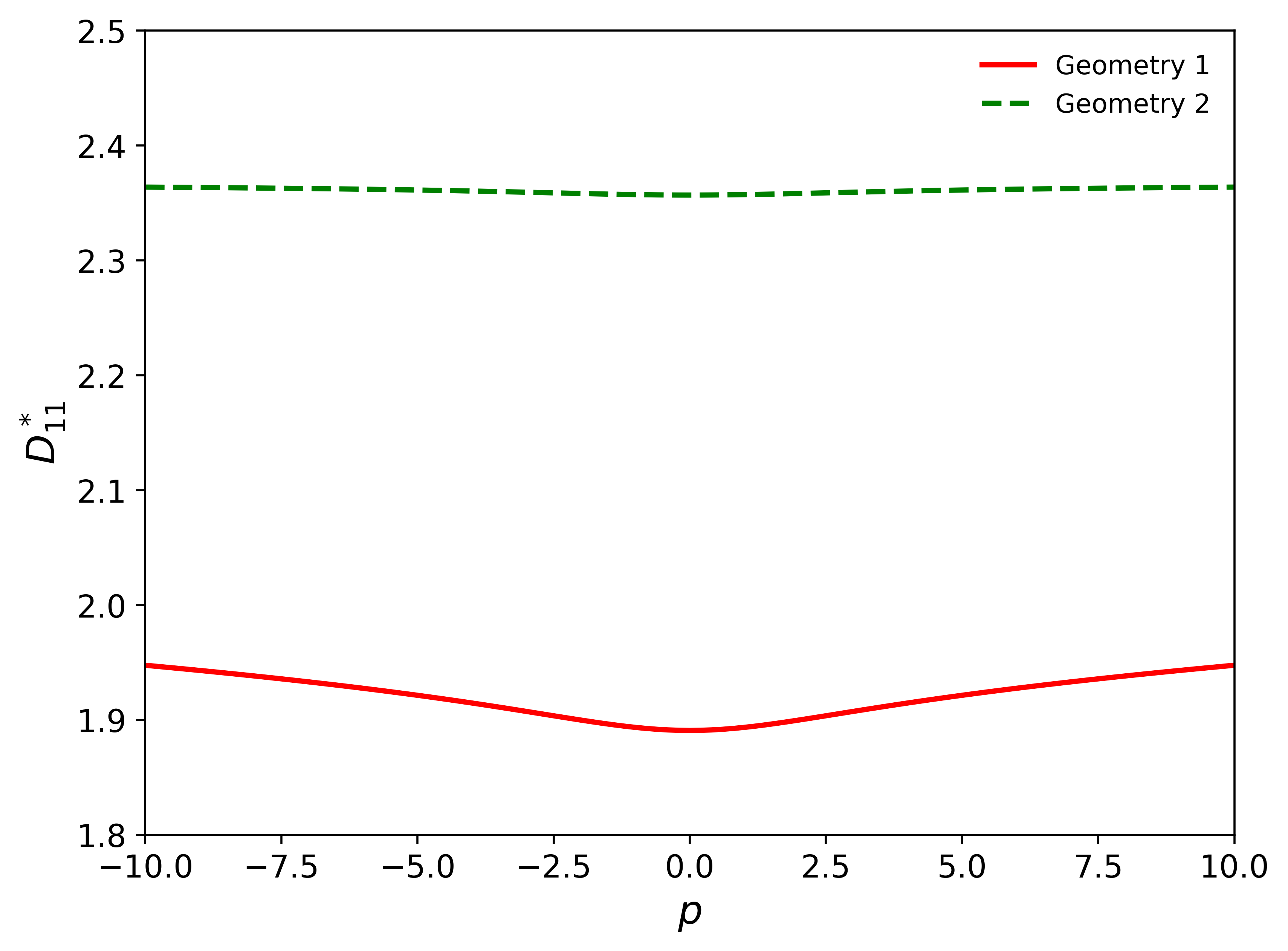}
	\hspace{0.3cm}
    \includegraphics[width=0.45\textwidth]{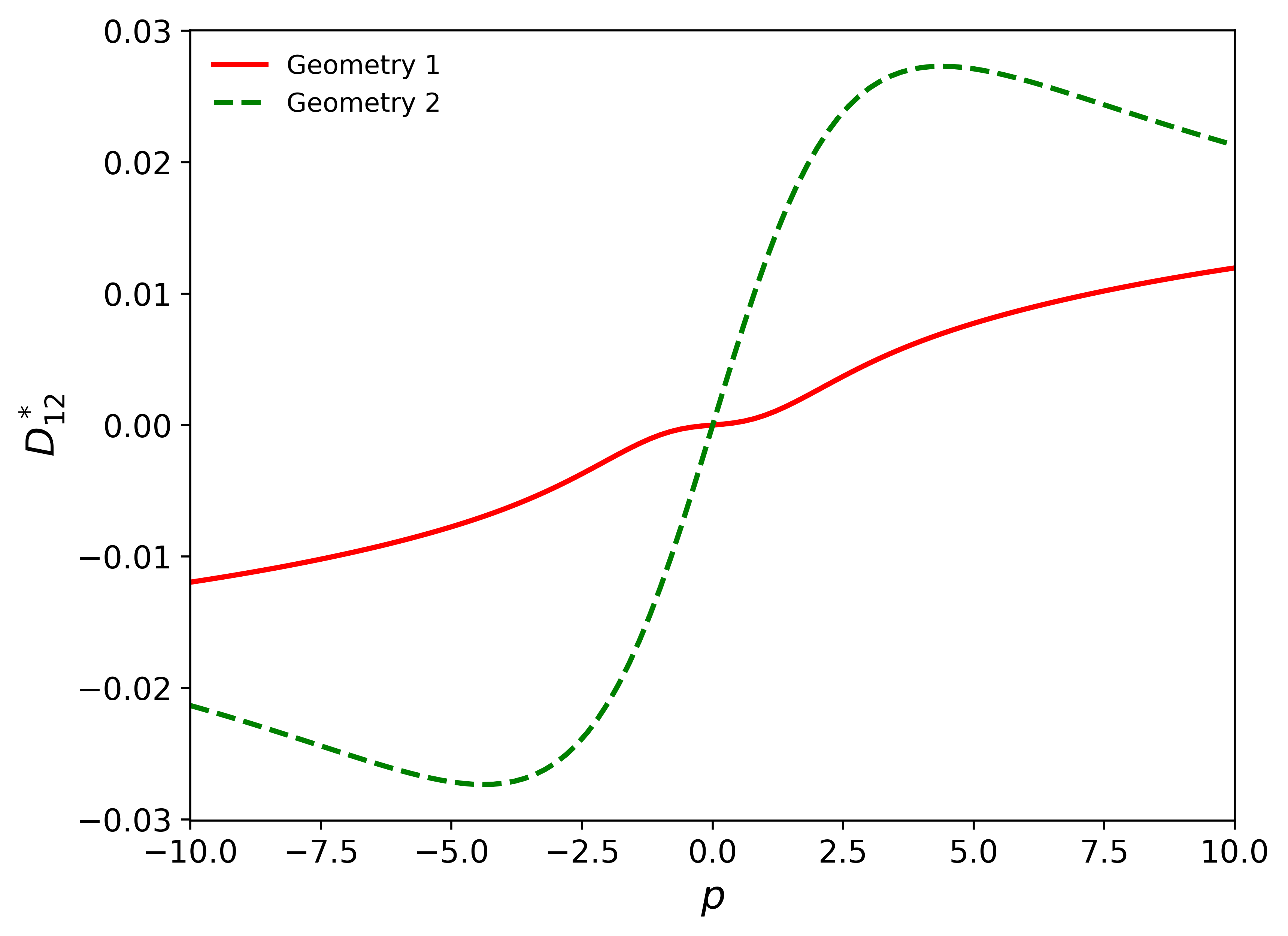}\\
    \includegraphics[width=0.45\textwidth]{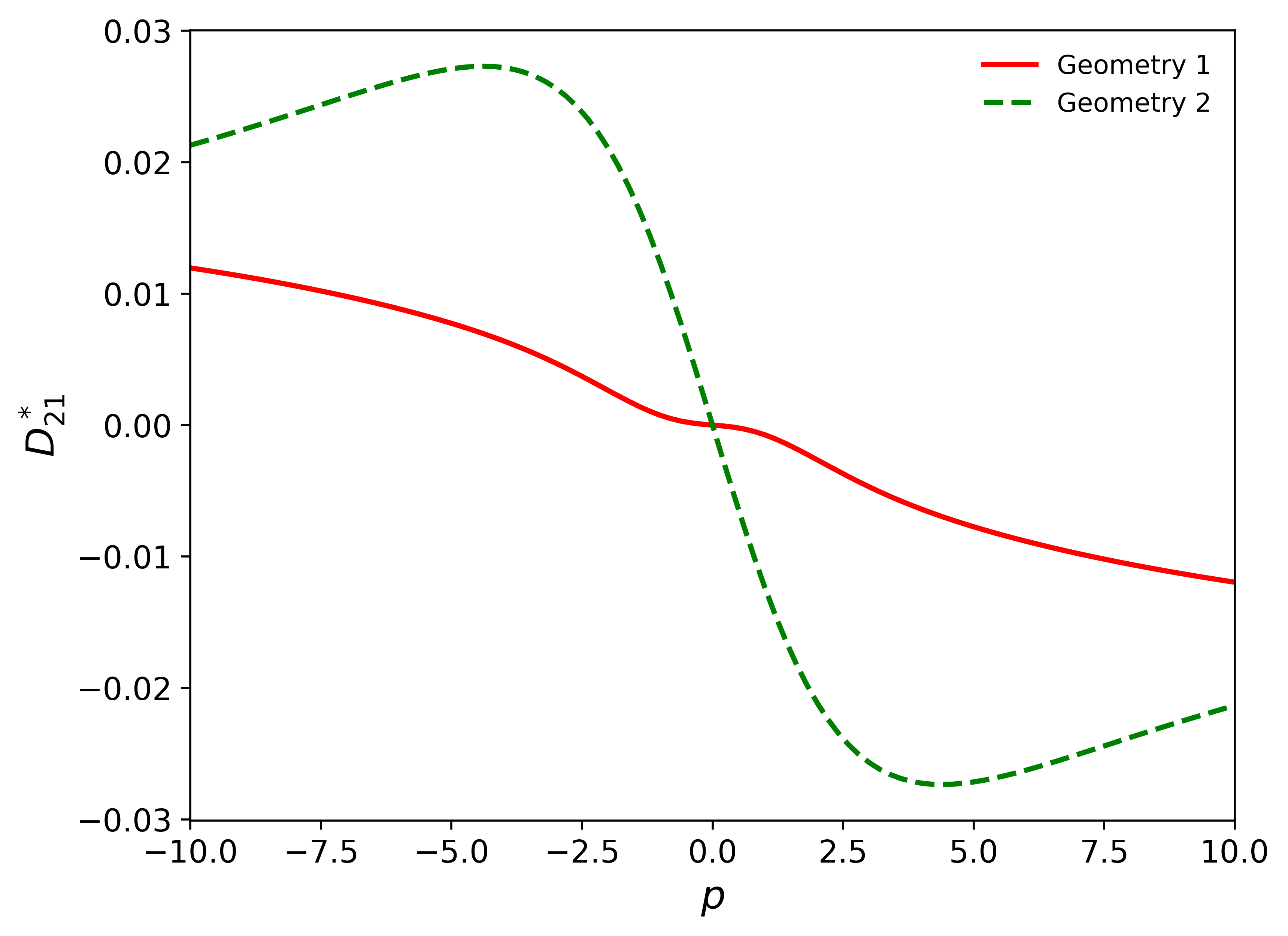}
	\hspace{0.3cm}
    \includegraphics[width=0.45\textwidth]{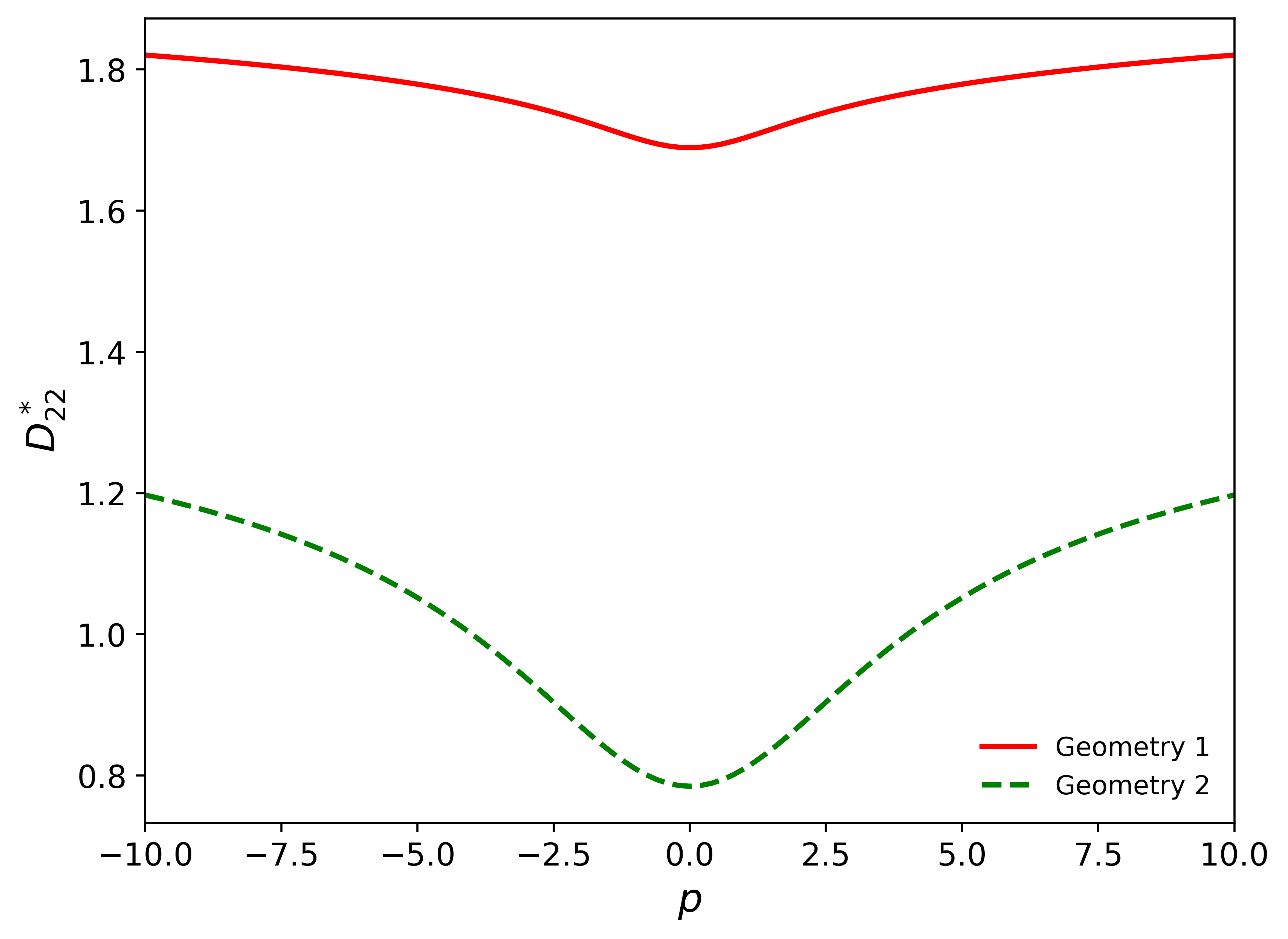}
	\caption{Comparison of the components of $D^*(W)$ for the fast diffusion case with both Geometry 1 (solid red line) and Geometry 2 (dashed green line). Here, we plot the components $D^*_{11}$ (top left), $D^*_{12}$ (top right),  $D^*_{21}$ (bottom left), and $D^*_{22}$ (bottom right) as functions of {the parameter $p$ which represents a type of local (microscopic) Peclet number.}}
	\label{fastdiffusion}
\end{figure}
Looking at \cref{fastdiffusion}, we notice that the main diagonal entries $D^*_{11}$ and $D^*_{22}$ are symmetric with respect to the vertical axis while the off-diagonal entries $D^*_{12}$ and $D^*_{21}$ are symmetric about the origin.
However, we note that the off-diagonal entries are close to $0$ and have therefore only a small impact on the macroscopic dispersion.
This is a consequence of both geometries being symmetric: in non-symmetric setups, e.g., angled rectangles or ellipses, the off-diagonal entries would play a bigger role. 
The particular case $p=0$ corresponds to ``no drift" and, in both geometries, it is the value with the slowest dispersion.

We can also see that while $D^*_{11}$ and $D^*_{22}$ are similar (both quantitatively and qualitatively) in the first geometry, they differ quite a bit in the second geometry.
Focusing on Geometry 2, the dispersion in the $y_1$ direction appears to be much faster than in the $y_2$ direction as can be seen by $D^*_{11}$ being almost twice as large as $D^*_{22}$.
This is to be expected since the geometric setup with the two rectangles impedes vertical flows much more than lateral flows.
More interestingly, $D^*_{11}$ is close to constant in $p$ while, on the other hand, $D^*_{22}$ is much more dynamic with changes of up to $50\%$ relative to the lowest value for $p=0$.
This shows that the drift interaction can play a large role in facilitating vertical flows for Geometry 2.

\paragraph{Case 2: Slow diffusion}
Now, we consider the slow diffusion case where the microscopic diffusion is small relative to the velocity field.
We compute the components of the effective dispersion tensor and compare the results in both geometries in \cref{slowdiffusion}.
For this case, we choose the diffusion tensor to be 
 \begin{align*}
 D(y):=
 \begin{pmatrix}
0.05 + \frac{1}{50}\sin(\pi y_1)\sin(\pi y_2) & 0\\
0 & 0.05 + \frac{1}{50}\sin(\pi y_1)
\end{pmatrix},\quad y=(y_1,y_2)\in Y,
\end{align*}
which satisfies $0.009\leq\det D(y)\leq0.0049$ everywhere in $Y$.
\begin{figure}
	\centering
    \begin{subfigure}[b]{0.48\textwidth}
        \centering
        \includegraphics[width=\textwidth]{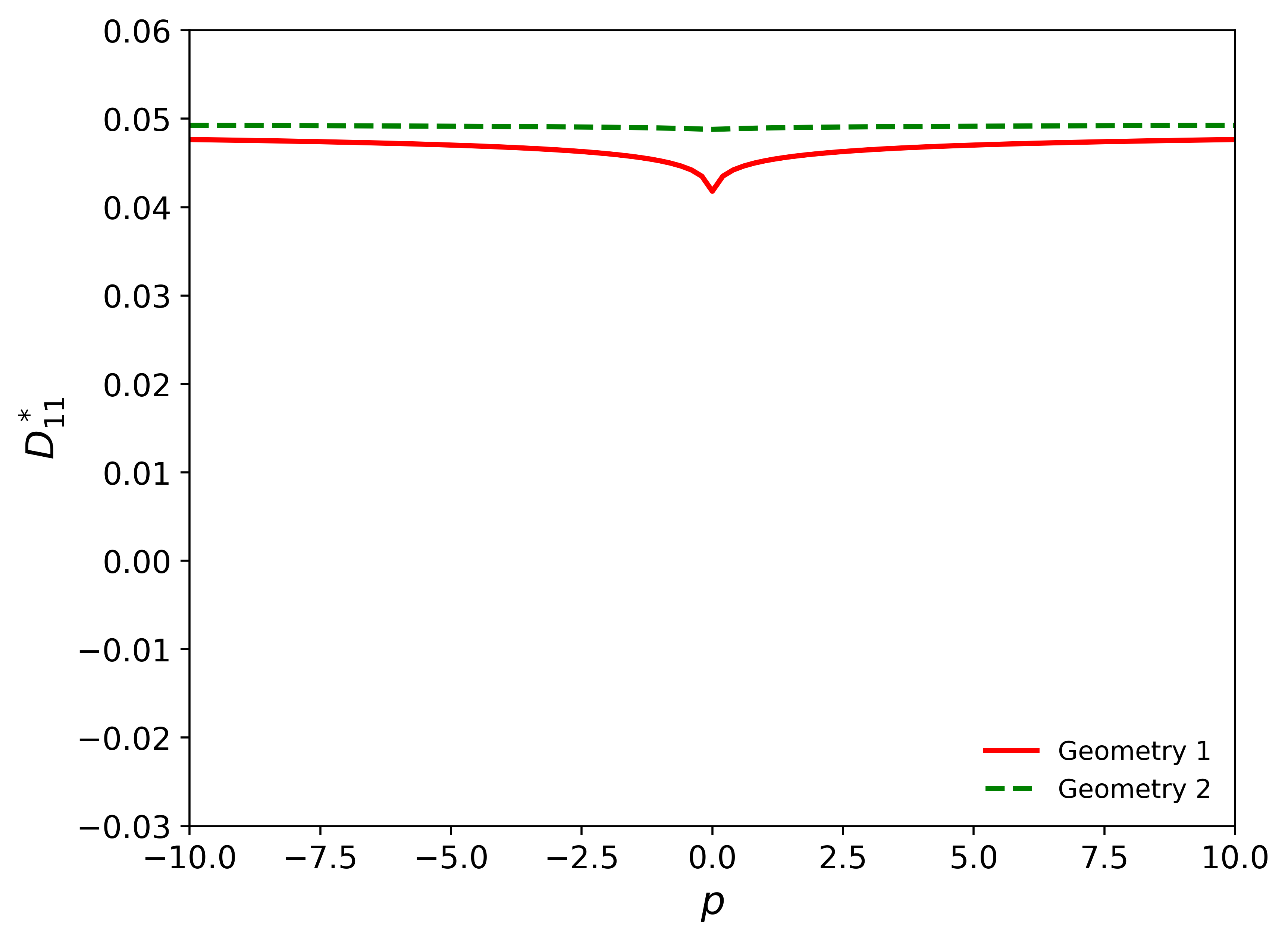}
    \end{subfigure}
    \hfill
    \begin{subfigure}[b]{0.48\textwidth}
        \centering
        \includegraphics[width=\textwidth]{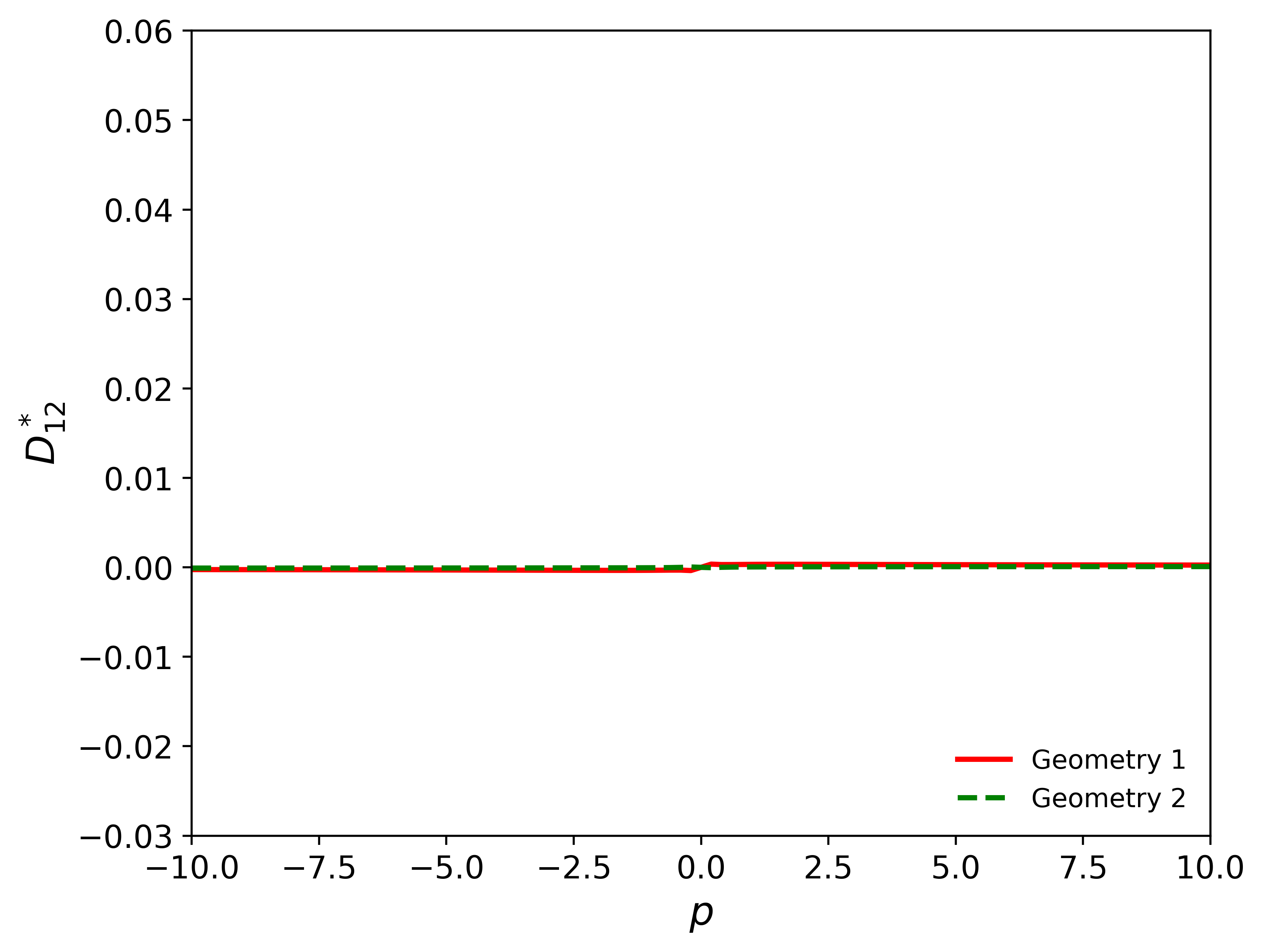}
    \end{subfigure}\\
    \begin{subfigure}[b]{0.48\textwidth}
        \centering
        \includegraphics[width=\textwidth]{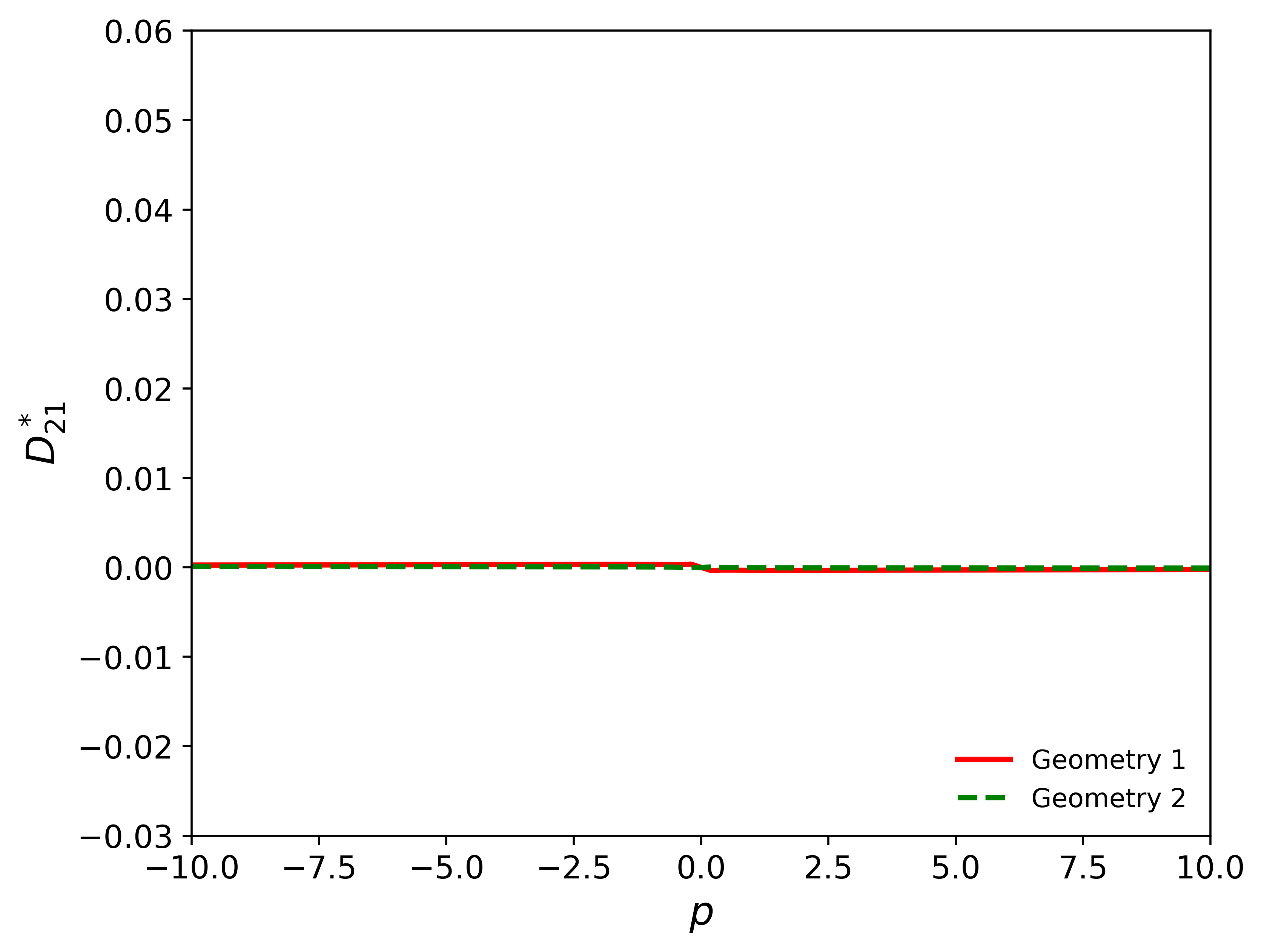}
    \end{subfigure}
    \hfill
    \begin{subfigure}[b]{0.48\textwidth}
        \centering
        \includegraphics[width=\textwidth]{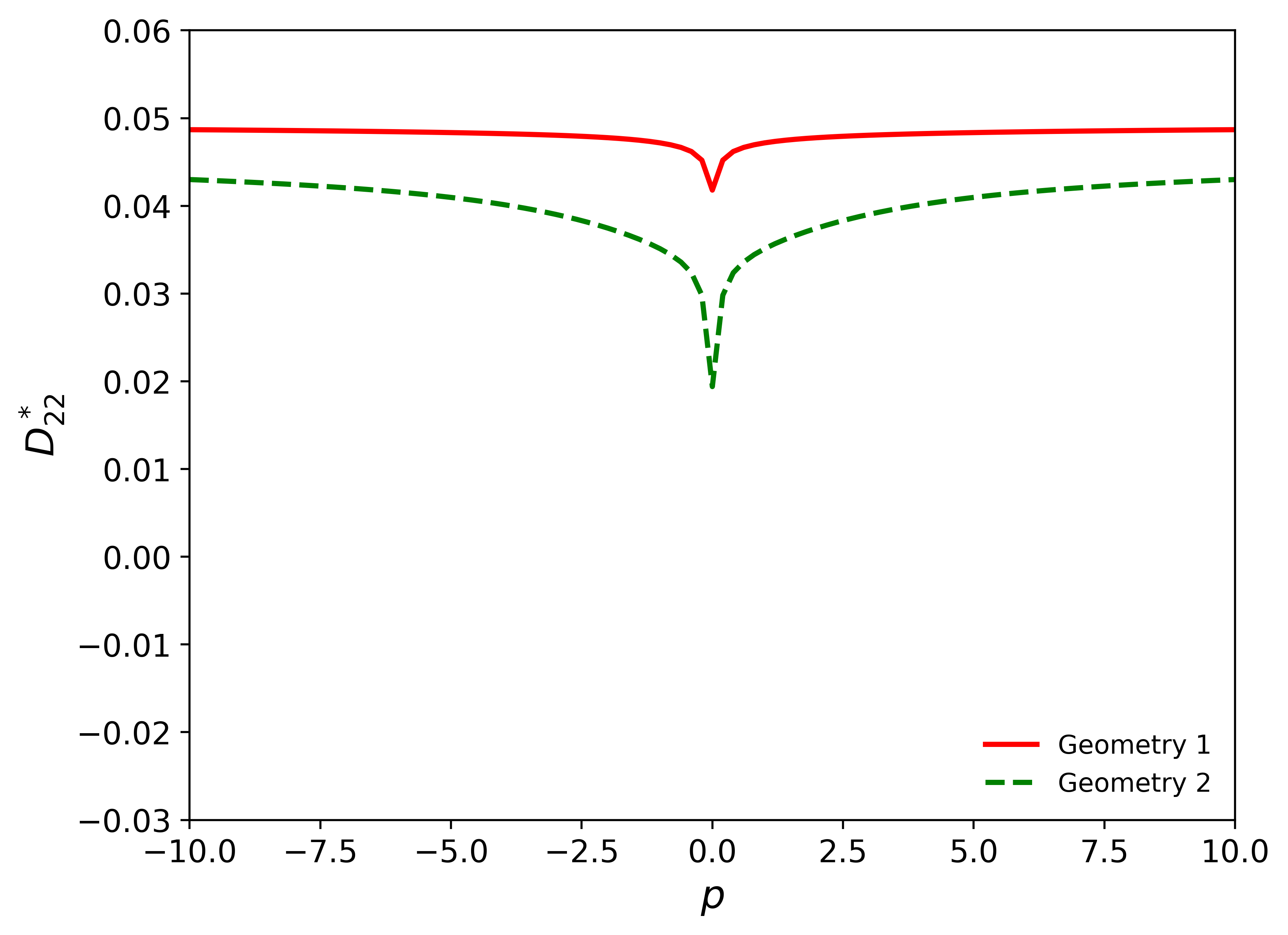}
    \end{subfigure}
		\caption{Comparison of the components of $D^*(W)$ for the slow diffusion case with both Geometry 1 (solid red line) and Geometry 2 (dashed green line). Here, we plot the components $D^*_{11}$ (top left), $D^*_{12}$ (top right),  $D^*_{21}$ (bottom left), and $D^*_{22}$ (bottom right) as functions of the parameter $p$.}
		\label{slowdiffusion}
\end{figure}
Comparing \cref{slowdiffusion} with \cref{fastdiffusion}, we can observe very similar trends qualitatively although the values are, of course, much lower due to the slow diffusion. 
At first glance, it appears that there are cusps forming at the critical point $p=0$ in $D^*_{11}$ for Geometry 1 and $D^*_{22}$ for both geometries.
However, a closer look shows that the transition is in fact smooth, see \cref{slowdiffusionzoom}.
These observed ``cusps" do however point to the fact that the slow diffusion case is much more volatile with respect to small changes close to the zero drift case $p=0$ when compared to the fast diffusion case.
Also, the relative changes in the dispersion values are higher than in the case of fast diffusion, e.g., in Geometry 2 the value $D^*_{22}$ experiences changes of more than 100$\%$ relative to the lowest value for $p=0$. 

\begin{figure}[ht]
	\centering
    \includegraphics[width=0.4\textwidth]{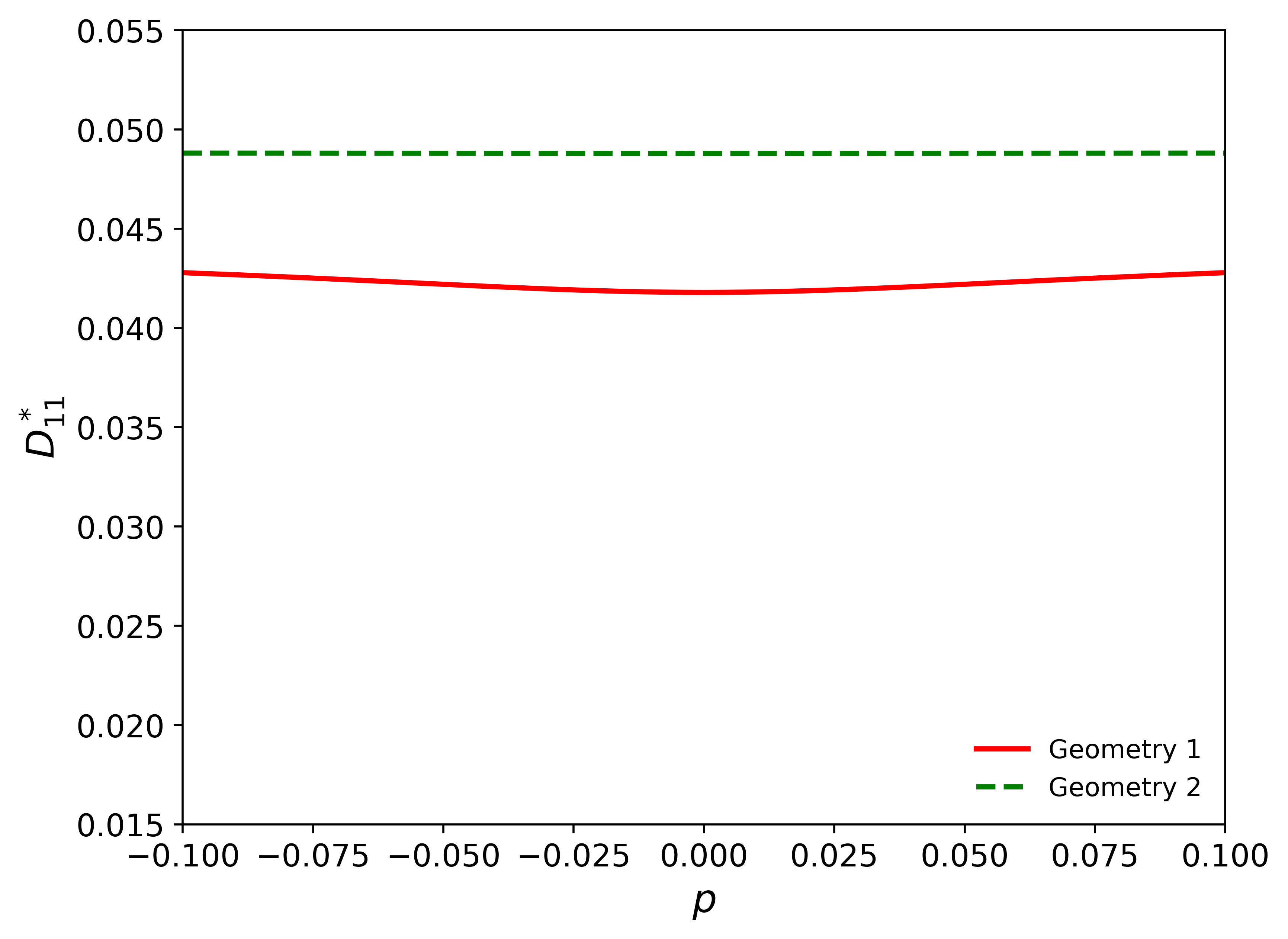}
	\hspace{1cm}
    \includegraphics[width=0.4\textwidth]{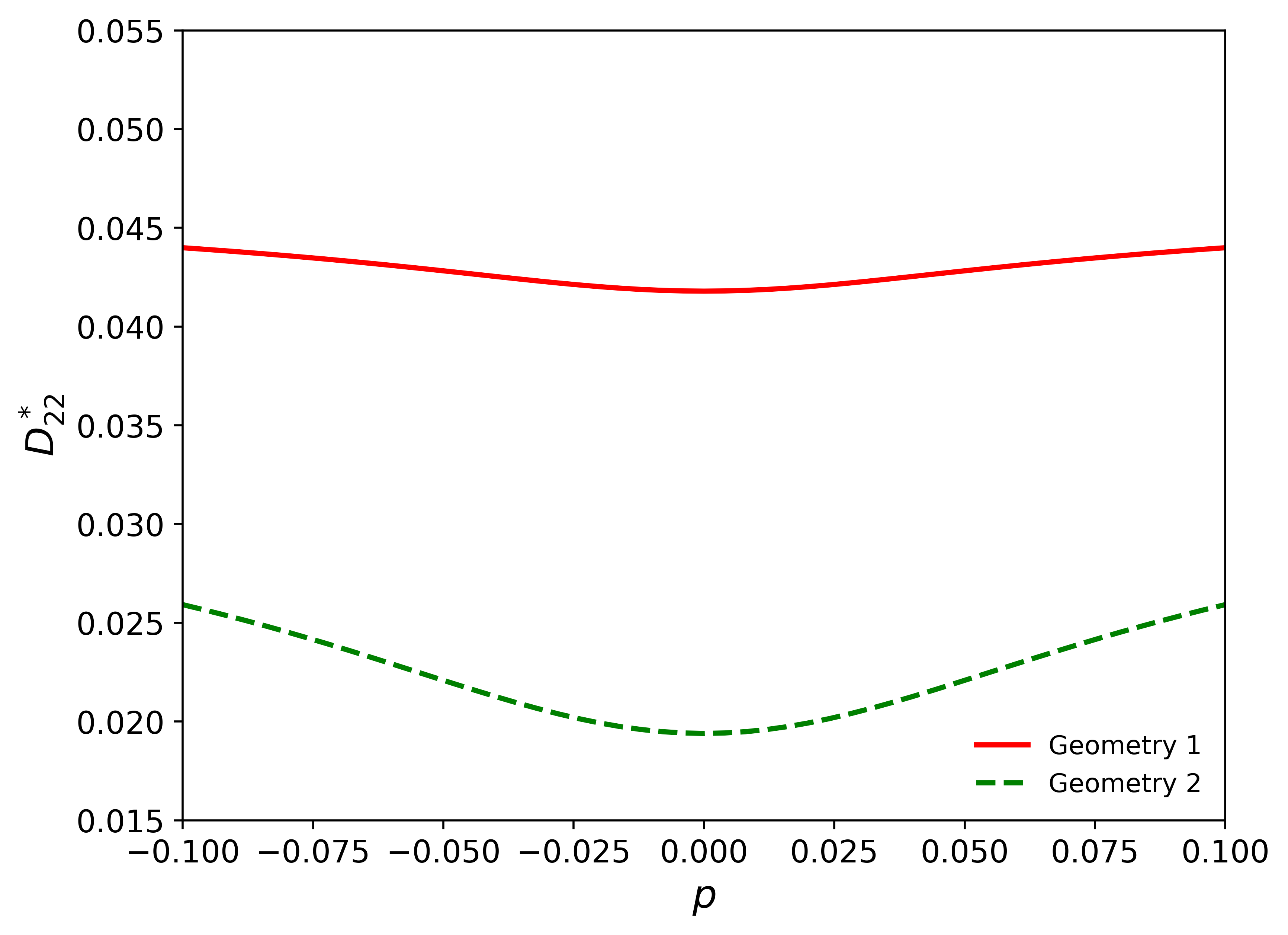}
	\caption{Comparison of two components of $D^*(W)$,  $D^*_{11}$ (left) and $D^*_{22}$ (right), in the neighbourhood of $p=0$ for the slow diffusion case.}
	\label{slowdiffusionzoom}
\end{figure}

\subsection{Macroscopic solution}
The goal of this section is to illustrate how the behavior of the dispersion tensor presented above affects the solution to the macroscopic equation.
In this section, we fix $\Omega = (0,1) \times (0,2)$ and the initial profile of concentration  
\begin{align}g(x_1, x_2) =  \begin{cases} \exp{(-10((x_1-0.5)^2 + (x_2-0.5)^2))}, \;\;\text{if} \;\; (x_1, x_2) \in  \mathcal{B}_{0.25}((0.5, 0.5)),\\
0, \;\;\text{otherwise}.
\end{cases} \end{align} 
We also fix a constant source term 
$f:\Omega\to\mathbb{R}$ given by \begin{align} f(x_1, x_2) = \begin{cases} 1000, \;\;\; \text{if} \;\;\; (x_1, x_2)\in \mathcal{B}_{0.25}((0.5, 0.5)),\\
0,  \;\;\;\text{otherwise}.
\end{cases}\end{align}
The iteration scheme requires an initial guess $u^0$ for which we choose $u^0(t, x) = g$.
For the microscopic ingredients, we take the same diffusion matrix as in the fast diffusion case above for all simulations. 
The velocity field, $B(y)$, is treated in the same manner as before. 

As explained previously, we iterate the FEniCS solvers until the maximum number of iterations is achieved or the error $e_k:=||u^{k+1}-u^k||_{L^2(0, T,  L^2(\Omega))}
< \mbox{tol}$, where $\mbox{tol}$ is a small tolerance value prescribed {\em a priori} and $k\in \mathbb{N}$ is the iteration index. 
For all simulations, we choose ${\rm tol}=10^{-7}$ and use $50$ space nodes in both directions in the macroscopic domain $\Omega$.
Additionally, we take $G_1(u) = G_2(u) = G(u)$ for these simulations. 

\paragraph{Effect of the microscopic geometries}
In Figure \ref{macrosolution}, heat maps of the concentration profile $u$ at time $T=2$ with $G(u) = 1-2u$ using both microscopic geometries are displayed.
Comparing the first and second plots in Figure \ref{macrosolution}, similar dispersive behavior of the concentration profiles is exhibited in both geometries. 
However, the concentration profile corresponding to Geometry 1 disperses faster than that corresponding to Geometry 2. 
This is expected as, due to the long horizontal rectangular obstacles in Geometry 2, the flow is hindered in the vertical direction. 
The third plot in  Figure \ref{macrosolution} shows the difference between both solutions corresponding to the two different microscopic geometries. 
Looking at this difference, we observe that the concentration in the neighborhood of the point $(0.5, 0.5)$ is higher for Geometry 2 than for Geometry 1.     
To quantify this observed behavior, we calculate the total amount of the mass $M(t)$ of concentration $u$ over time on the upper subdomain, $(0,1)\times (1,2)$, using the following formula: 
\begin{align}
M(t) := \int_1^2  \int_0^1 u(t, x_1, x_2)\di{x_1}\di{x_2}. 
\end{align}
We then plot the indicator $M(t)$ over time $t$ for both choices of geometry in the left plot of Figure \ref{mass_vertical_domain}.

\begin{figure}[ht]
		\centering	
    \includegraphics[width=0.30\textwidth]{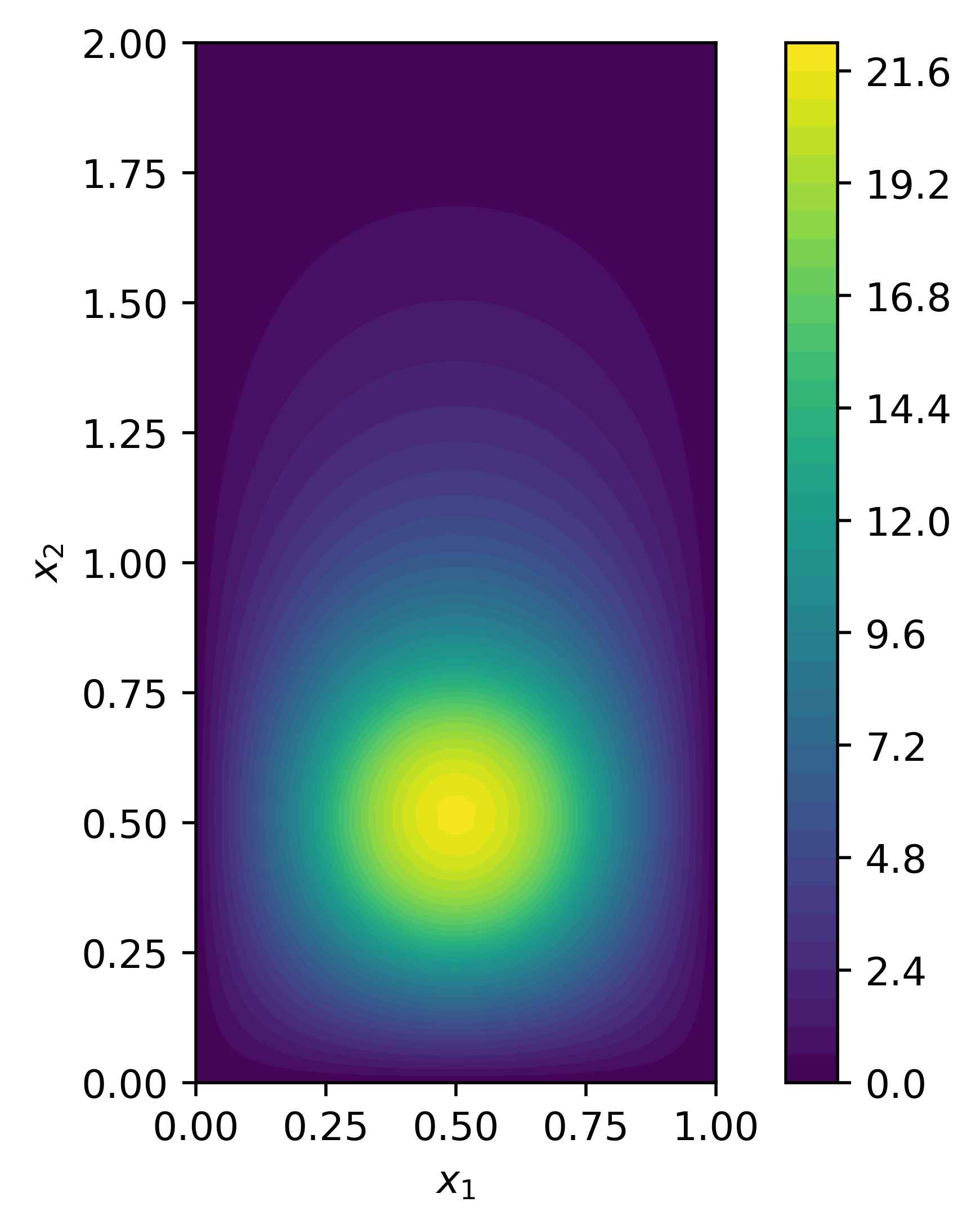}
		\hspace{0.001cm}
       \includegraphics[width=0.30\textwidth]{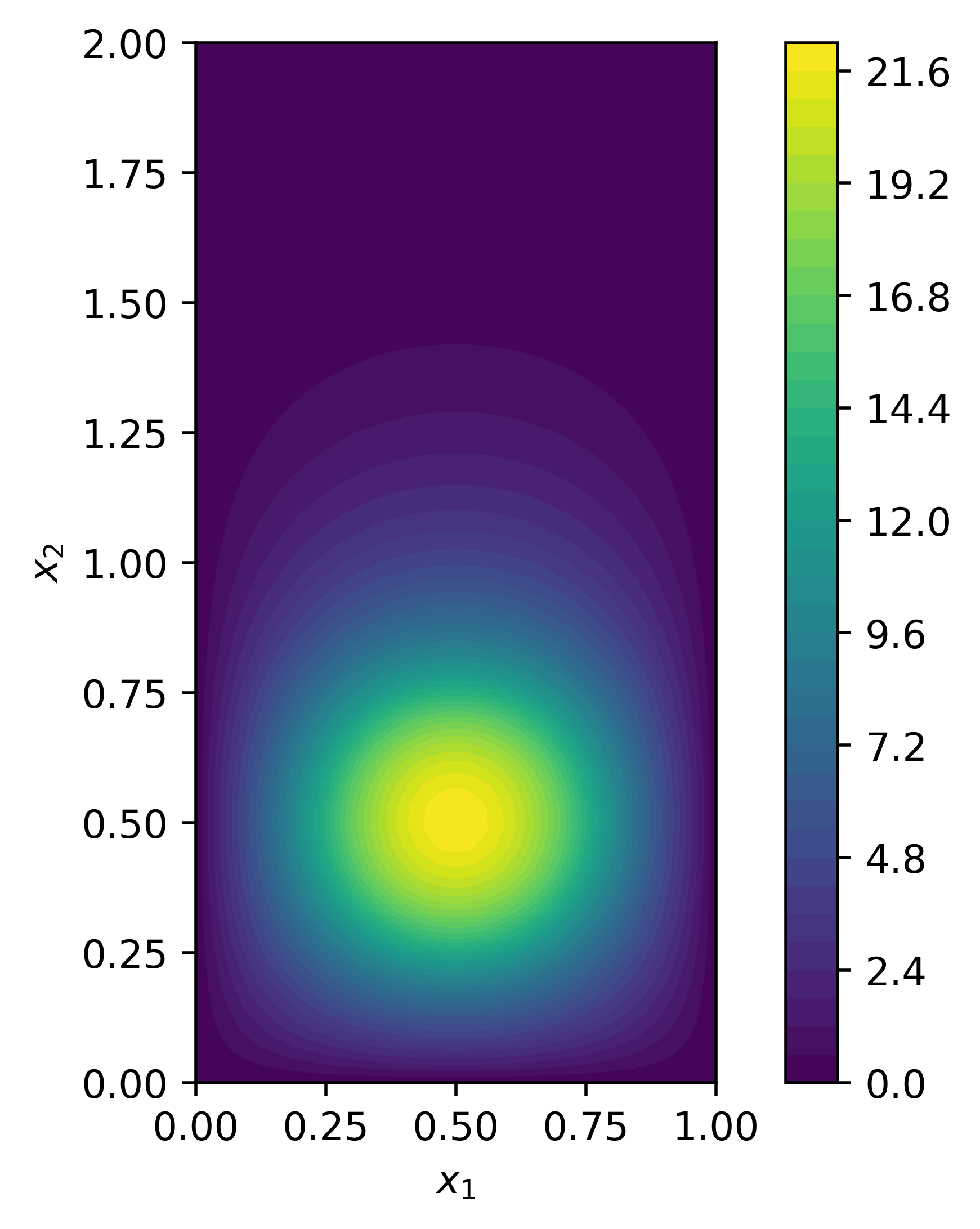}
        \hspace{0.001cm}
        \includegraphics[width=0.30\textwidth]{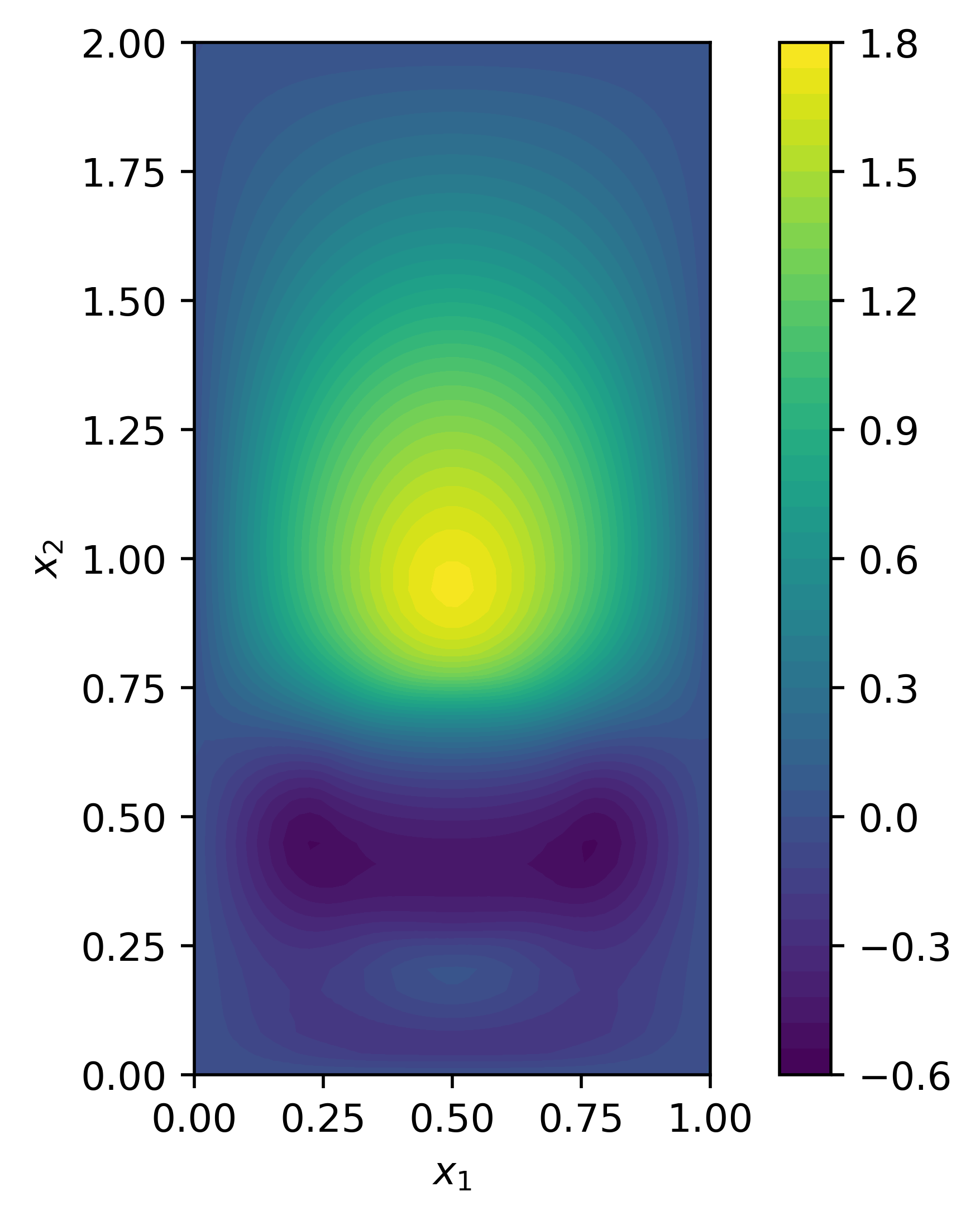}
		\caption{ Macro-solution with $G(u) = 1-2u$ corresponding to microscopic Geometry 1 (left),  Geometry 2 (middle), and the difference between these two solutions (right) at $T = 2$.}
		\label{macrosolution}
\end{figure}

\paragraph{Effect of the nonlinear drift}
We now investigate how the nonlinear drift interactions affect the macroscopic dispersion.
Here, all parameters are kept the same as in the previous numerical experiment, with the exception of the new nonlinearity $\Tilde{G}(u) = 1/(0.0001 + |1-2u|)$ which we compare with the previous $G(u) = 1-2u$.
The particular form of $\Tilde{G}$ is chosen to simulate the effect of high particle concentration impeding the flow. 
This can be seen by looking at \cref{fastdiffusion}, where low values of $|G(u)|$ or $\Tilde{G}(u)$ correspond to slightly slower dispersion than high values.

\begin{figure}[h!]
		\centering	
  \includegraphics[width=0.30\textwidth]{Pictures/Macro_T2_Geometry2.png}
  \hspace{0.001cm}
   \includegraphics[width=0.30\textwidth]{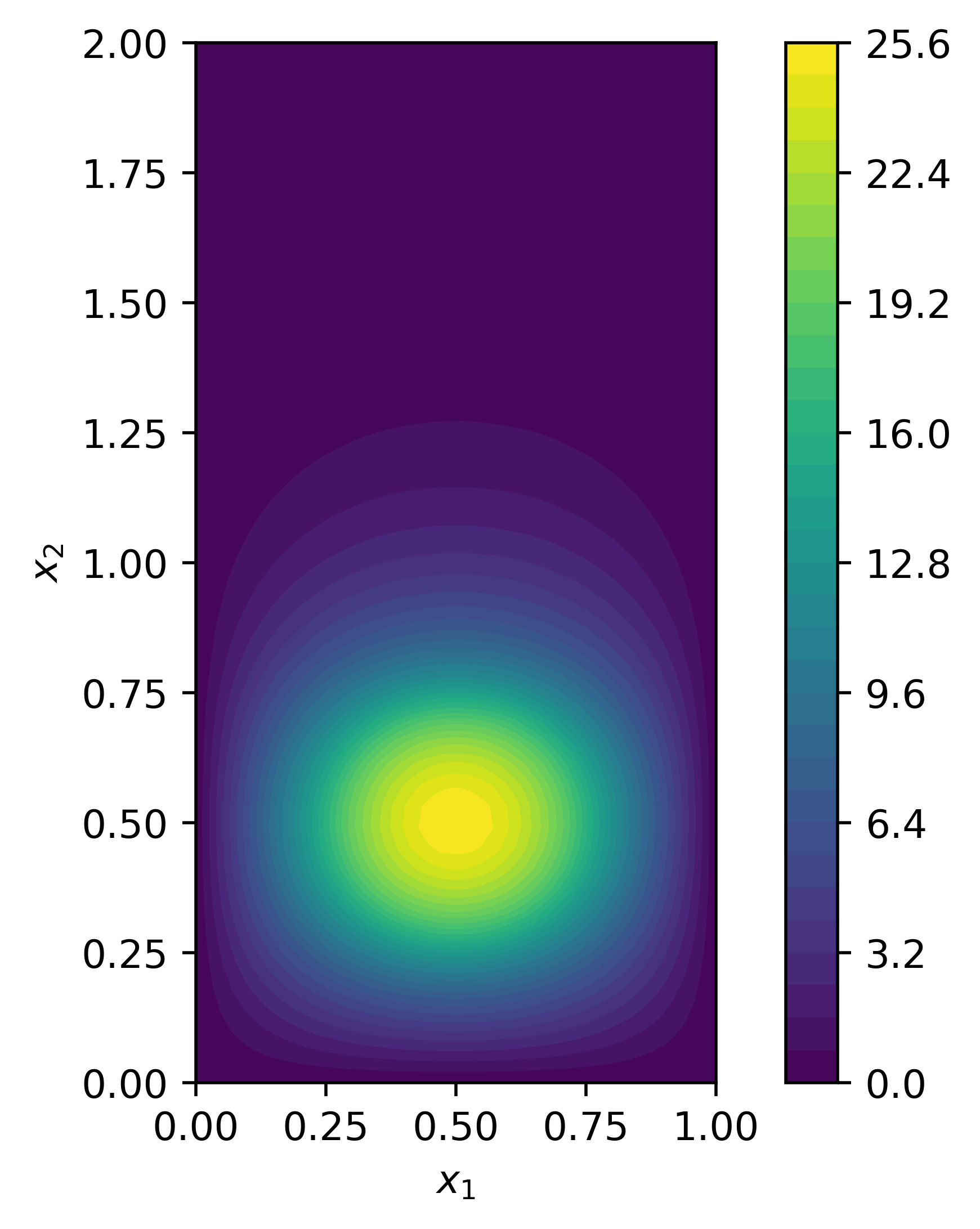}
  \hspace{0.001cm}
    \includegraphics[width=0.30\textwidth]{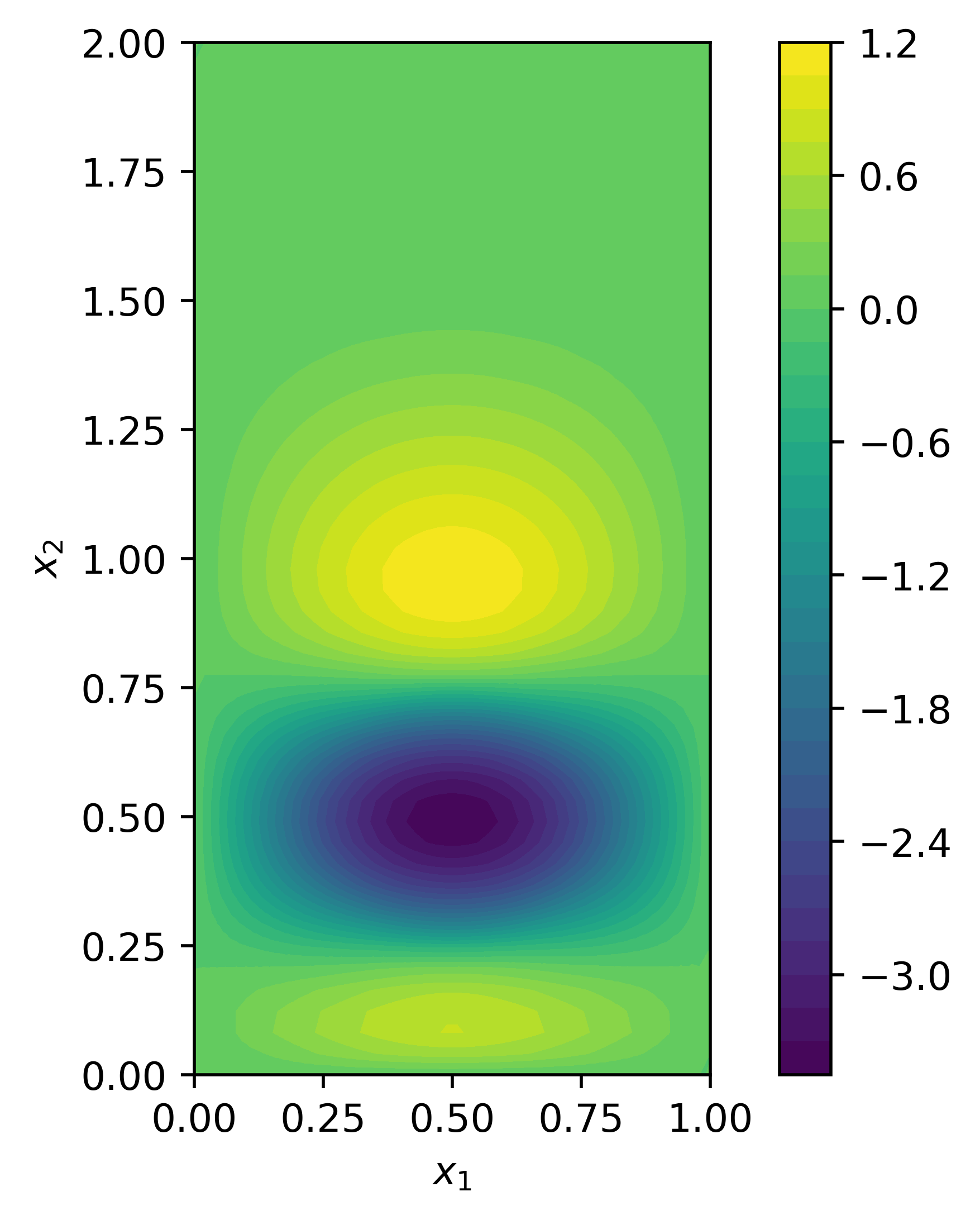}
		\caption{Macro-solution corresponding to Geometry 2 with $G(u) = 1-2u$ (left), $\Tilde{G}(u) = 1/(0.0001 + |1-2u|)$ (middle), and the difference between these two solutions (right) at $T=2$.}
		\label{macrosolution_diff_non}
\end{figure}

The macroscopic solution for both nonlinearities corresponding to Geometry 2 at $T=2$ is plotted in Figure \ref{macrosolution_diff_non}. 
Similar to before, the concentration diffuses slower with the nonlinearity, $\Tilde{G}(u)$, than with $G(u)$. 
Comparing the first and second plots in Figure \ref{macrosolution_diff_non}, we observe that the solution profile is more concentrated around the source in the case of $\Tilde{G}(u)$ (middle) than $G(u)$ (left). 
These effects can easily be seen in the right-most plot in Figure \ref{macrosolution_diff_non} where we calculate the difference between the concentration profiles. 
Similar observations can be made in the case of Geometry 1, however, with these choices of ingredients, the difference is less drastic, and so, we omit these plots.
As before, we examine the evolution of the mass in the upper subdomain, $M(t)$, and plot this in Figure \ref{mass_vertical_domain} on the right.

\begin{figure}[h!]
	\centering
    \begin{subfigure}[b]{0.48\textwidth}
        \centering
        \includegraphics[width=.85\textwidth]{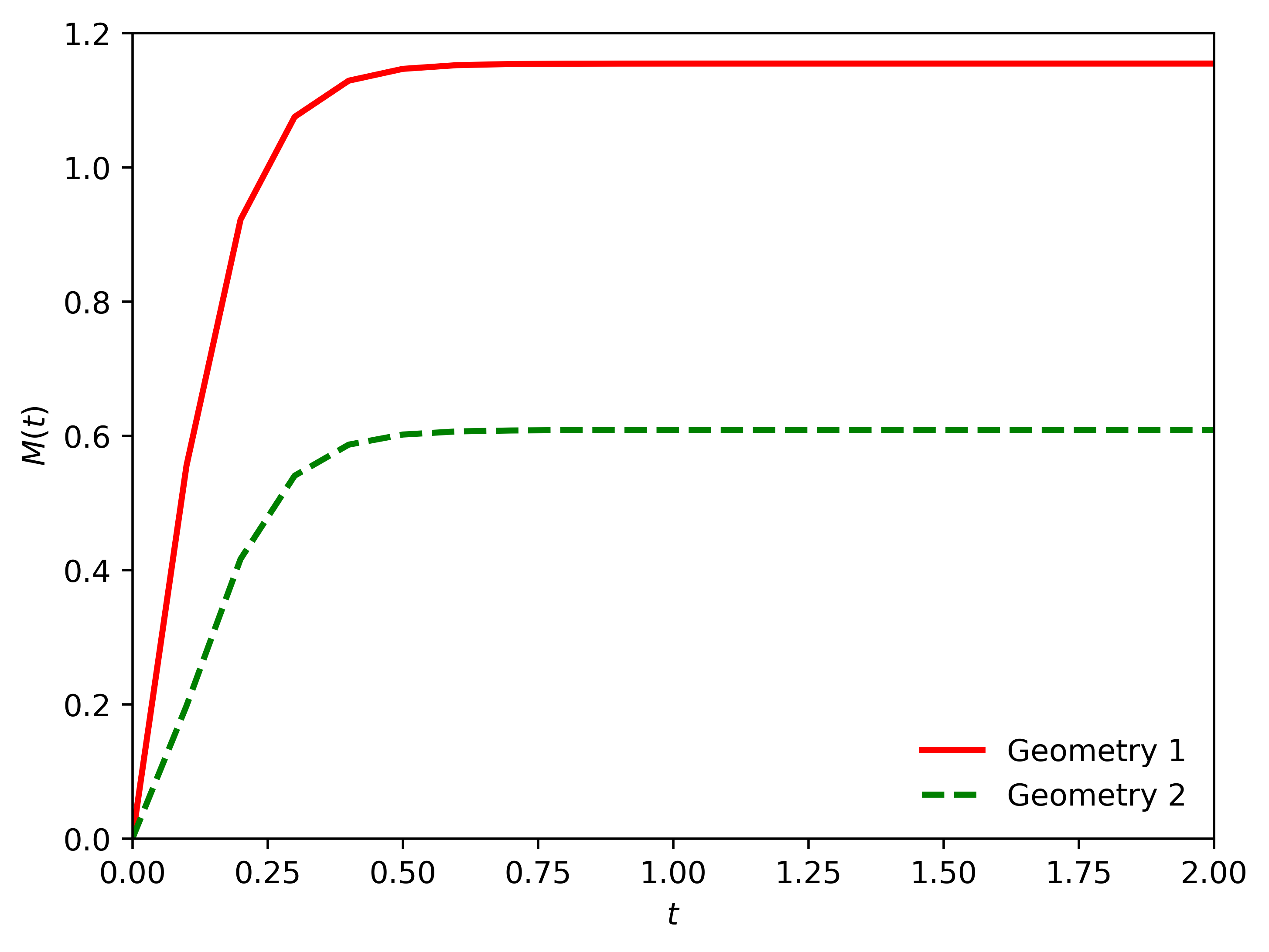}
    \end{subfigure}
    \hfill
    \begin{subfigure}[b]{0.48\textwidth}
        \centering
        \includegraphics[width=.85\textwidth]{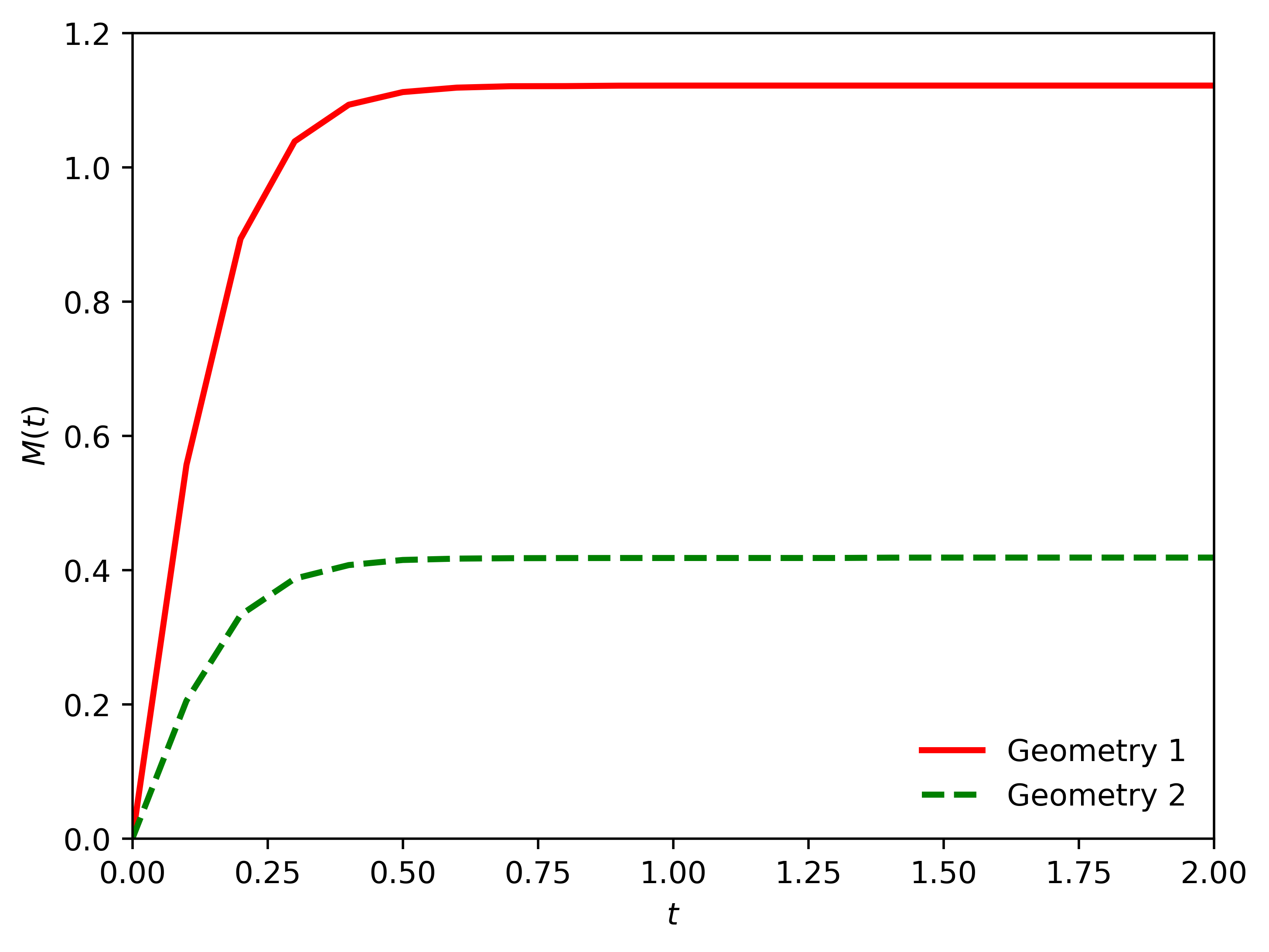}
    \end{subfigure}
		\caption{Comparison of mass $M(t)$ for Geometry 1 (solid red line) and Geometry 2 (dashed green line) with $G(u) = 1-2u$ (left) and $\Tilde{G}(u) = 1/(0.0001 + |1-2u|)$ (right).}
		\label{mass_vertical_domain}
\end{figure}

\newpage
\section{Conclusion and outlook}\label{section_conclusion}
Understanding the effect microscopic fast drifts have on the structure of the macroscopic dispersion is generally a difficult task, even in the case of simple microstructures underlying a regular porous medium. 
We have shown in this paper that solutions to our model (Problem $P(\Omega)$) not only exist but can also capture subtle two-scale effects of potential interest for applications. 
We will explore in follow-up works the numerical analysis of our setting as well as more numerical simulations of relevant practical case studies. 

By constructing a convergent iterative scheme to Problem $P(\Omega)$, we have shown the existence of weak solutions, as well as the well-posedness of linear iterative approximations to the original two-scale system. 
We have implemented the constructed iterative scheme using finite element methods and numerically illustrated some potential effects the microscopic geometry and the drift coupling have on the macroscopic dispersion tensor. 
Such {\em a posteriori} effects can be seen either by examining the different profiles of the macroscopic solutions or by following the time and space distribution of values in the entries of the effective dispersion tensor.  
We strongly believe that the working methodology proposed here can be used to approach a larger class of two-scale problems, covering among others the settings discussed, for instance, in
\cite{marusik2005,allaire2016,ijioma2019fast,raveendran2022upscaling}.

While the iteration scheme is a useful analytical tool, it is computationally expensive as the microscopic problem is solved for every time and space node in each iteration. 
However, given suitable assumptions about the choice of nonlinearity $G$, the dispersion tensor can be precomputed by solving the auxiliary problem \eqref{aux1}--\eqref{aux3} over a range of values for the parameter $p$ in a similar way to the computations in Section \ref{SSec:Influence_on_dispersion}.
Additionally, this precomputing can be parallelized, as the microscopic cell problems are independent and save considerable computation time. 
In light of this, we plan to study how this parallelization and precomputing technique affects the numerical results in terms of both the computation time and the order of accuracy. 
Of course, this requires us to additionally study the fully discrete error analysis of the Galerkin approximation (cf. e.g. \cite{lind2020semidiscrete, nepal2023analysis}) to determine the order of convergence in space and time.
Some early results in this direction can be found in \cite{nepal2024numerical}.

Within the frame of this manuscript, we considered that the macroscopic problem is defined in a two-dimensional space since our motivation originated from a hydrodynamic limit performed for an interacting particles system (TASEP) posed in two dimensions. 
So, it is natural to take $\Omega\subset \mathbb{R}^2$. 
However, without much additional difficulty, we can extend our problem and the corresponding results to dimension $3$ and higher. 
The only notable change is that, if Problem $P(\Omega)$ is posed in $\Omega\times Y\subset \mathbb{R}^n\times \mathbb{R}^d$, then there will be $d$ systems of nonlinear elliptic cell problems associated with the dispersion tensor. 
Additionally, the mathematical analysis work is not restricted to Dirichlet boundary conditions.
Other choices of boundary conditions (e.g. homogeneous Neumann) are allowed as well. 
Furthermore, the macroscopic equation does not need to be scalar. 
A system of macroscopic equations coupled correctly with a family of elliptic cell problems can be handled as well in a similar fashion.

\section*{Acknowledgments} We thank  H. Notsu (Kanazawa University, Japan) and M. Alam (Mahindra University, India) for fruitful discussions during their visit to Karlstad. 
The work of V.R., S.N., and A.M. is partially supported by the Swedish Research Council's project ``{\em  Homogenization and dimension reduction of thin heterogeneous layers}" (grant nr. VR 2018-03648).
The research activity of M.E. is funded by the European Union’s Horizon 2022 research and innovation program under the Marie Skłodowska-Curie fellowship project {\em{MATT}} (project nr.~101061956).
R.L. and A.M. are grateful to Carl Tryggers Stiftelse for their financial support via the grant CTS 21:1656.
 \bibliographystyle{amsplain}
	\bibliography{main}

\end{document}